\theoremstyle{plain}
\newtheorem{theorem}{Theorem}[section]
\newtheorem*{expectation}{Expectation}
\newtheorem{prop}[theorem]{Proposition}
\newtheorem{lemma}[theorem]{Lemma}
\newtheorem{cor}[theorem]{Corollary}
\newtheorem{theoremintro}{Theorem}
\theoremstyle{definition}
\newtheorem{defin}[theorem]{Definition}
\theoremstyle{remark}
\newtheorem{remark}[theorem]{Remark}
\newcommand{\sheaf}[1]{\mathscr{#1}}
\newcommand{\OO}{\sheaf{O}}
\newcommand{\EE}{\sheaf{E}}
\newcommand{\divisor}{\mathrm{div}}
\newcommand{\CliffAlg}{\mathscr{C}}
\newcommand{\CliffZ}{\mathscr{Z}}
\newcommand{\CliffB}{\mathscr{B}}
\newcommand{\isom}{\cong}
\newcommand{\Z}{\mathbb Z}
\newcommand{\A}{\mathbb A}
\newcommand{\C}{\mathbb C}
\renewcommand{\P}{\mathbb P}
\newcommand{\Q}{\mathbb Q}
\newcommand{\Gm}{\mathbb{G}_{\mathrm{m}}}
\DeclareMathOperator{\Aut}{\mathrm{Aut}}
\DeclareMathOperator{\Br}{\mathrm{Br}}
\DeclareMathOperator{\Pic}{\mathrm{Pic}}
\DeclareMathOperator{\SSpec}{\mathbf{Spec}}
\DeclareMathOperator{\Spec}{\mathrm{Spec}}
\DeclareMathOperator{\coker}{\mathrm{coker}}
\newcommand{\inv}{^{-1}}
\newcommand{\mult}{^{\times}}
\newcommand{\dual}{^{\vee}}
\newcommand{\tensor}{\otimes}
\newcommand{\bslash}{\smallsetminus}
\newcommand{\mapto}[1]{\xrightarrow{#1}}
\newcommand{\wt}[1]{\widetilde{#1}}
\newcommand{\et}{\mathrm{\acute{e}t}}
\newcommand{\linedef}[1]{\textsl{#1}}
\newcommand{\Het}{H_{\et}}
\newcommand{\ur}{\mathrm{nr}}
\newcommand{\Hur}{H_{\ur}}
\newcommand{\Pfister}[1]{\ll\!{#1}\gg}
\newcommand{\quadform}[1]{<\! #1 \!>}
\newcommand{\Local}{\mathsf{Local}}
\newcommand{\Ab}{\mathsf{Ab}}
\newcommand{\Frac}{\mathrm{Frac}}
\newcommand{\im}{\mathrm{im}}
\newcommand{\CH}{\mathrm{CH}}
\newcommand{\cores}{\mathrm{cores}}
\newcommand{\Hom}{\mathrm{Hom}}
\begin{document}

\title[Universal unramified cohomology of cubic fourfolds]{Universal unramified cohomology of cubic fourfolds containing a plane}

\author[Auel, Colliot-Th\'el\`ene, Parimala]{Asher Auel,
Jean-Louis Colliot-Th\'el\`ene, R.\ Parimala}

\address{Asher Auel, Department of Mathematics, Yale University, 10
Hillhouse Avenue, New Haven, CT 06511, USA \hfill \texttt{\it E-mail
address: \tt asher.auel@yale.edu}}

\address{R. Parimala, Department of Mathematics \& CS, Emory
University, 400 Dowman Drive, Atlanta, GA 30322, USA \hfill \texttt{\it E-mail
address: \tt parimala@mathcs.emory.edu}}

\address{Jean-Louis Colliot-Th\'el\`ene, CNRS, Universit\'e Paris-Sud,
Math\'ematiques, B\^{a}timent 425, 91405 Orsay Cedex, France \hfill \texttt{\it E-mail
address: \tt jlct@math.u-psud.fr}}



\begin{abstract}
We prove the universal triviality of the third unramified cohomology
group of a very general complex cubic fourfold containing a plane.
The proof uses results on the unramified cohomology of quadrics due to
Kahn, Rost, and Sujatha.
\end{abstract}

\maketitle

\section*{Introduction}

Let $X$ be a smooth \linedef{cubic fourfold}, i.e., a smooth cubic
hypersurface in $\P^5$.  A well known problem in algebraic geometry
concerns the rationality of $X$ over $\C$.

\begin{expectation}
The very general cubic fourfold over $\C$ is irrational.
\end{expectation}

Here, ``\linedef{very general}'' is usually taken to mean ``in the
complement of a countable union of proper Zariski closed subsets'' in
the moduli space of cubic fourfolds.  At present, however, not a
single cubic fourfold is provably irrational, though many families of
rational cubic fourfolds are known.

If $X$ contains a plane $P$ (i.e., a linear two dimensional subvariety
of $\P^5$), then $X$ is birational to the total space of a quadric
surface bundle $\wt{X} \to \P^2$ by projecting from $P$.  Its
discriminant divisor $D \subset \P^2$ is a sextic curve.  The
rationality of a cubic fourfold containing a plane over $\C$ is also a
well known problem.

\begin{expectation}
The very general cubic fourfold containing a plane over $\C$ is irrational.
\end{expectation}

Assuming that the discriminant divisor $D$ is smooth, the discriminant
double cover $S \to \P^2$ branched along $D$ is then a K3 surface of
degree 2 and the even Clifford algebra of the quadric fibration
$\wt{X} \to \P^2$ gives rise to a Brauer class $\beta \in \Br(S)$,
called the \linedef{Clifford invariant} of $X$.  This invariant does
not depend on the choice of $P$ (if $\beta$ is not zero, the plane $P$
is actually the unique plane contained in $X$).  By classical results
in the theory of quadratic forms (e.g.,
\cite[Thm.~6.3]{knus_parimala_sridharan:rank_4}), $\beta$ is trivial
if and only if the quadric surface bundle $\wt{X} \to \P^2$ has a
rational section.  Thus if $\beta$ is trivial then $X$ is rational
(see \cite[Thm.~3.1]{hassett:rational_cubic}), though it may happen
that $X$ is rational even when $\beta$ is not trivial (see
\cite[Thm.~11]{ABBV:cubic}).

Some families of rational cubic fourfolds have been described by Fano
\cite{fano}, Tregub \cite{tregub}, \cite{tregub:new}, and
Beauville--Donagi \cite{beauville-donagi}.  In particular,
\linedef{pfaffian cubic fourfolds}, defined by pfaffians of
skew-symmetric $6\times 6$ matrices of linear forms, are rational.
Hassett \cite{hassett:special-cubics} describes, via lattice theory,
Noether--Lefschetz divisors ${\mathcal C}_d$ in the moduli space
${\mathcal C}$ of cubic fourfolds.  In particular, ${\mathcal C}_{14}$
is the closure of the locus of pfaffian cubic fourfolds and ${\mathcal
C}_8$ is the locus of cubic fourfolds containing a plane.  Hassett
\cite{hassett:rational_cubic} identifies countably many divisors of
${\mathcal C}_8$---the union of which is Zariski dense in
$\mathcal{C}_8$---consisting of rational cubic fourfolds with trivial
Clifford invariant.

\medskip

A natural class of birational invariants arise from unramified
cohomology groups.  The unramified cohomology of a rational variety is
trivial (i.e., reduces to the cohomology of the ground field), cf.\
\cite[Thm.~1.5]{colliot:formes_quadratiques_multiplicatives} and
\cite[\S2~and~Thm.~4.1.5]{colliot:santa_barbara}.  Such invariants
have been known to provide useful obstructions to rationality.

For a smooth cubic fourfold $X$ over $\C$, the unramified cohomology
groups
$$
\Hur^i(X/\C, \Q/\Z(i-1))
$$ 
vanish for $0 \leq i \leq 3$, see Theorem~\ref{thm:easy}.  The case
$i=1$ follows from the Kummer sequence (see Theorem~\ref{thm:easy}).
For $i = 2$, one appeals to the Leray spectral sequence and a version
of the Lefschetz hyperplane theorem due to M.\ Noether, as in
\cite[Thm.~A.1]{poonen_voloch}.  For $i=3$, the proof relies on the
integral Hodge conjecture for cycles of codimension 2 on smooth cubic
fourfolds, a result proved by Voisin~\cite[Thm.~18]{voisin:aspects}
building on \cite{murre} and \cite{zucker}.

Something stronger is known when $i \leq 2$, namely that for any field
extension $F/\C$, the natural map $H^i(F,\Q/\Z(i-1)) \to
\Hur^i(X_{F}/F,\Q/\Z(i-1))$ is an isomorphism, see
Theorem~\ref{thm:easy}.  In this case, we say that the unramified
cohomology is \linedef{universally trivial}.  The universal behavior
of an unramified cohomology group can lead to a finer obstruction to
rationality.  While we do not know if the unramified cohomology of an
arbitrary smooth cubic fourfold over $\C$ is universally trivial in
degree 3, our main result is the following.

\begin{theoremintro}
\label{thm:main}
Let $X \subset \P^5$ be a very general cubic fourfold containing a
plane over $\C$.  Then $\Hur^3(X/\C,\Q/\Z(2))$ is universally trivial,
i.e., the natural map $H^3(F,\Q/\Z(2)) \to \Hur^3(X_{F}/F,\Q/\Z(2))$
is an isomorphism for every field extension $F/\C$.
\end{theoremintro}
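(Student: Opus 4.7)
The plan is to exploit the birational model $\pi \colon \widetilde X \to \P^2$ of $X$ obtained by projecting from the plane $P$, which realizes $X$ as birational to the total space of a quadric surface bundle, and then to invoke the results of Kahn, Rost, and Sujatha on unramified cohomology of quadrics. Since $\Hur^3(-/F,\Q/\Z(2))$ is a birational invariant of smooth projective varieties over $F$, it suffices to prove that $\Hur^3(\widetilde X_F/F,\Q/\Z(2)) = H^3(F,\Q/\Z(2))$ for every extension $F/\C$.

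Fix such $F$ and set $K := F(\P^2)$. Let $Q$ be the generic fiber of $\widetilde X_F \to \P^2_F$, a smooth quadric surface over $K$ with discriminant quadratic extension $L = K(\sqrt{d}) = F(S)$. Restricting to the generic fiber embeds
\[
\Hur^3(\widetilde X_F/F,\Q/\Z(2)) \hookrightarrow \Hur^3(K(Q)/K,\Q/\Z(2)),
\]
and a class coming from $\widetilde X_F$ is further required to be unramified at the codimension-one points of $\widetilde X_F$ lying over the sextic discriminant $D \subset \P^2_F$. The Kahn--Rost--Sujatha description of $\Hur^3(K(Q)/K,\Q/\Z(2))$ modulo $H^3(K,\Q/\Z(2))$ expresses this quotient in terms of the Clifford invariant $\beta_F \in \Br(L)$ of $Q$. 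Combining this description with the extra residue conditions over $D$, and using purity on $\P^2_F$ to descend any constant contribution back to $H^3(F,\Q/\Z(2))$, reduces the problem to a question purely about $\beta_F$.

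The main obstacle is this final step: showing that under the very general hypothesis on $X$, the contribution from the Clifford invariant $\beta$ vanishes for every $F/\C$. Since $\beta$ is nontrivial on $S$ for a very general cubic fourfold containing a plane, the vanishing is delicate. The idea is to exploit that for very general $X$ the K3 surface $S$ has Picard rank $1$, so that residue and unramified-Brauer computations on $S$ strictly constrain where the relevant classes can come from; together with a specialization or transcendence-degree argument passing from the generic point of $\mathcal{C}_8$ to an arbitrary extension $F/\C$, this should force the obstruction to vanish and yield the claimed universal triviality.
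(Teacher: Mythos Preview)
Your setup is right and matches the paper: pass to the quadric surface bundle $\widetilde X_F \to \P^2_F$ and use Kahn--Rost--Sujatha to lift any $\psi \in \Hur^3(X_F/F,\Q/\Z(2))$ to some $\xi \in H^3(F(\P^2),\Q/\Z(2))$. (A small correction: for a quadric \emph{surface} $Q$, KRS gives \emph{surjectivity} of $H^3(K,\Q/\Z(2)) \to \Hur^3(Q/K,\Q/\Z(2))$; the cokernel is zero, not ``described by the Clifford invariant''.) The residue analysis then shows $\xi$ is automatically unramified at the generic point of $D$ (via Tsen and the cone case), and the genuine obstruction lies at the finitely many \emph{bad curves} $C \subset \P^2_F$, distinct from $D$ and a fixed line $L$, where $\partial_C(\xi)$ is forced to equal the restriction of the Clifford class $\alpha \in \Br(F(\P^2))$.

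The gap is in how you propose to kill these bad residues. The paper does \emph{not} use a specialization or transcendence-degree argument from the generic point of $\mathcal C_8$, nor unramified-Brauer computations on $S$; such an approach would not obviously control residues at bad curves over an arbitrary $F$. Instead, the very general hypothesis $r^*:\Pic(\P^2_F)\isomto\Pic(S_F)$ (via rigidity of N\'eron--Severi) is used in a concrete way: for each bad curve $C$ with generic point $\gamma$, since $d|_\gamma$ is a square the preimage of $C$ in $S_F$ splits, and $\Pic(S_F)=\Z$ then produces a rational function $f_\gamma \in F(\P^2)^\times$ which is a local parameter along $C$, has divisor supported only on $C$ and $L$, and is a \emph{norm} from $F(S)/F(\P^2)$. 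Setting $f = \prod_\gamma f_\gamma$, one subtracts the correction term $\alpha \cup (f)$ from $\xi$: by construction $\xi - \alpha \cup (f)$ is unramified away from $L$, hence comes from $H^3(F,\Q/\Z(2))$. A direct symbol computation (using that $f$ is a norm from the discriminant extension and that the diagonalized form $\langle 1,a,b,abd\rangle$ is isotropic over $F(Q)$) shows $\alpha \cup (f)$ dies in $H^3(F(Q),\mu_2^{\otimes 2})$, so $\psi$ itself is constant. Your proposal correctly flags that $\Pic(S)=\Z$ is the key input, but it does not locate the actual mechanism---the norm-parameter construction and the explicit correction class $\alpha\cup(f)$---without which the argument cannot close.
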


Here, ``\linedef{very general}'' means that the quadric surface bundle
$\wt{X} \to \P^2$ attached to the plane $P \subset X$ has simple
degeneration (see Proposition~\ref{prop:simpledeg}) and that the
associated K3 surface $r : S \to \P^2$ of degree 2 satisfies $r^* :
\Pic(\P^2) \to \Pic(S)$ is an isomorphism.  A very general cubic
fourfold containing a plane has nontrivial Clifford invariant.
Theorem~\ref{thm:main} implies the corresponding statement with
$\mu_2^{\tensor 2}$ coefficients as well (see
Corollary~\ref{cor:mainZ/2}).

This article is organized as follows.  In \S
\ref{sec:unramified_cohomology}, we expand on the notion of universal
triviality and, more generally, universal torsion by a positive
integer, of the Chow group $A_0(X)$ of zero-cycles of degree zero on a
smooth proper variety $X$.  We state an extension of a theorem of
Merkurjev~\cite[Thm.~2.11]{merkurjev:unramified_cycle_modules} on the
relationship between universal torsion properties of $A_0(X)$ and
analogous properties for unramified cohomology groups and, more
generally, unramified classes in cycle modules.  We also recall that
rationally connected varieties, and among them Fano varieties, satisfy
such a universal torsion property for $A_0(X)$.
 
In \S \ref{sec:cubichyper}, we discuss the specific case of cubic
hypersurfaces $X \subset \P^{n+1}$ for $n \geq 2$.  Already in this
case, the universal triviality is an open question.  We register a
folklore proof that $A_{0}(X)$ is killed by 18, and by 2 as soon as
$X$ contains a $k$-line. In particular, for any smooth cubic
hypersurface $X \subset \P^{n+1}$ over $\C$, with $n \geq 2$, and any
field extension $F/\C$, the group $A_0(X_F)$ is 2-torsion, which also
follows from the existence of a unirational parameterization of $X$ of
degree 2.  The only possible interesting unramified cohomology groups
are thus those with coefficients $\Z/2\Z$. We recall the known results
on these groups.  Our main result, Theorem~\ref{thm:main}, then
appears as one step beyond what was known, but still leaving open the
question of the universal triviality of $A_{0}(X)$.

In \S \ref{sec:Unramified_cohomology_of_quadrics}, we recall some
results on the unramified cohomology groups of quadrics in degrees at
most 3.  The most important is due to Kahn, Rost, and Sujatha (see
Theorem~\ref{thm:KRS}), who build upon earlier work of Merkurjev,
Suslin, and Rost.

In \S \ref{sec:Clifford_algebras}, we discuss the fibration into
2-dimensional quadrics over the projective plane, as well as its
corresponding even Clifford algebra, associated to a smooth cubic
fourfold containing a given plane.

In \S \ref{sec:proof}, we use the results of the previous two sections
to prove Theorem \ref{thm:main}.  The ``very general'' hypothesis
allows us to construct well-behaved parameters at the local rings of
curves on $\P^2$ that split in $S$ (see Lemma~\ref{lem:bad6}).

We thank the American Institute of Mathematics for sponsoring the
workshop ``Brauer groups and obstruction problems:\ moduli spaces and
arithmetic'' held February 25 to March 1, 2013, in Palo Alto,
California.  This work emerges from a problem session group formed
during the workshop, and involved the participation of Marcello
Bernardara, Jodi Black, Evangelia Gazaki, Andrew Kresch, Eric Riedl,
Olivier Wittenberg, and Matthew Woolf. We thank Andrew Kresch for
further discussions after the AIM meeting.

The first author is partially supported by National Science Foundation
grant MSPRF DMS-0903039 and by an NSA Young Investigator grant. The
third author is partially supported by National Science Foundation
grant DMS-1001872.

\section{Unramified cohomology and Chow group of 0-cycles}
\label{sec:unramified_cohomology}

\subsection{Unramified elements}
\label{subsec:unramified}
A general framework for the notion of ``unramified element'' is
established in \cite[\S2]{colliot:santa_barbara}.  Let $k$ be a field
and denote by $\Local_k$ the category of
local $k$-algebras
together with local $k$-algebra homomorphisms.  Let $\Ab$ be the category
of abelian groups and let $M : \Local_k \to \Ab$ be a functor.  For
any field $K/k$ the group of \linedef{unramified elements} of $M$ in
$K/k$ is the intersection
$$ 
M_{\ur}(K/k) = \bigcap_{k \subset \OO \subset K} \im \bigl( M(\OO) \to
M(K) \bigr)
$$
over all rank 1 discrete valuations rings $k \subset \OO \subset K$
with $\Frac(\OO)=K$.

There is a natural map $M(k) \to M_{\ur}(K/k)$ and we say that the
group of unramified elements $M_{\ur}(K/k)$ is \linedef{trivial} if
this map is surjective.

For $X$  an integral scheme of finite type over a field $k$, 
in this paper 
we write $M_{\ur}(X/k) : = M_{\ur}(k(X)/k)$.  
By this definition, the group $M_\ur(X/k)$ is a
$k$-birational invariant of integral schemes of finite type over $k$.
We will be mostly concerned with the functor $M=\Het^i(-,\mu)$ with
coefficients $\mu$ either $\mu_2^{\tensor (i-1)}$ (under the assumption
${\rm char}(k) \neq 2$)
or $$\Q/\Z(i-1) : =
\varinjlim \mu_m^{\tensor (i-1)},$$ the limit being taken over all
integers $m$ coprime to the characteristic of $k$.  In this case,
$M_{\ur}(X/k)$ is called the \linedef{unramified cohomology} group
$\Hur^i(X,\mu)$ of $X$ with coefficients in $\mu$.

\begin{theorem}\label{MSBK}
Let $k$ be a field of  characteristic  different from a prime $p$.
The natural map
$$
H^i(F,\mu_p^{\tensor (i-1)}) \to H^i(F,\Q/\Z(i-1))
$$
is injective.
\end{theorem}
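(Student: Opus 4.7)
The plan is to use the short exact sequence arising from multiplication by $p$ on $\Q/\Z(i-1)$ to reduce the injectivity statement to a $p$-divisibility claim on $H^{i-1}(F,\Q/\Z(i-1))$, and then to deduce that divisibility from the Bloch--Kato theorem.

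First, I would observe that since $\mathrm{char}(F) \neq p$, the Galois module $\Q/\Z(i-1)$ is $p$-divisible with $p$-torsion equal to $\mu_p^{\tensor (i-1)}$. The short exact sequence of Galois modules
$$
0 \to \mu_p^{\tensor (i-1)} \to \Q/\Z(i-1) \xrightarrow{p} \Q/\Z(i-1) \to 0
$$
gives the long exact sequence fragment
$$
H^{i-1}(F,\Q/\Z(i-1)) \xrightarrow{p} H^{i-1}(F,\Q/\Z(i-1)) \to H^i(F,\mu_p^{\tensor (i-1)}) \to H^i(F,\Q/\Z(i-1)),
$$
so the kernel of the natural map equals $H^{i-1}(F,\Q/\Z(i-1))/p$. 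Injectivity is therefore equivalent to the $p$-divisibility of $H^{i-1}(F,\Q/\Z(i-1))$.

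To establish that divisibility, I would write
$$
H^{i-1}(F,\Q/\Z(i-1)) = \varinjlim_m H^{i-1}(F,\mu_m^{\tensor (i-1)})
$$
(the colimit being taken over positive integers $m$ coprime to $\mathrm{char}(F)$), and then invoke the Bloch--Kato theorem (Merkurjev--Suslin in degree $2$, Voevodsky--Rost in higher degrees) to identify each term with $K^M_{i-1}(F)/m = K^M_{i-1}(F) \tensor_\Z \Z/m$. The transition maps in the direct system come from the inclusions $\mu_m^{\tensor (i-1)} \hookrightarrow \mu_{mn}^{\tensor (i-1)}$ of torsion subgroups of $\Q/\Z(i-1)$ and correspond, on the Milnor K-theory side, to the natural map $\Z/m \to \Z/mn$ embedding $m$-torsion as $mn$-torsion. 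Hence the colimit becomes $K^M_{i-1}(F) \tensor_\Z (\Q/\Z)'$, where $(\Q/\Z)'$ denotes the prime-to-$\mathrm{char}(F)$ part of $\Q/\Z$. Since the second factor of this tensor product is divisible by every integer coprime to the characteristic, the whole group is divisible; in particular it is $p$-divisible, as required.

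The hard part of the plan is really the Bloch--Kato (Voevodsky--Rost) theorem itself; once one grants that identification with Milnor K-theory, everything else amounts to routine manipulations of long exact sequences and filtered colimits.
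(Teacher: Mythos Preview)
Your argument is correct and is precisely the standard fleshing-out of what the paper only alludes to: the paper does not give a proof but simply records that the statement ``is a well known consequence of the norm residue isomorphism theorem \ldots\ in degree $i-1$,'' and your long-exact-sequence reduction to the $p$-divisibility of $H^{i-1}(F,\Q/\Z(i-1))$, followed by the identification of the colimit with $K^M_{i-1}(F)\tensor(\Q/\Z)'$ via Bloch--Kato, is exactly how one makes that remark precise. There is nothing different in approach to compare.
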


This is a well known consequence of the norm residue isomorphism
theorem (previously known as the Bloch--Kato conjecture) in degree
$i-1$. We use this only for $i=3$, in which case it is a consequence
of the Merkurjev--Suslin theorem.  For $i=3, p=2$, this is a theorem
of Merkurjev.

If $F/k$ is a field extension, we write $X_F = X \times_k F$.  If $X$
is geometrically integral over $k$, we say that $M_{\ur}(X/k)$ is
\linedef{universally trivial} if
$M_{\ur}(X_{F}/F)$ is trivial for every
field extension $F/k$.

\begin{prop}[{\cite[\S2~and~Thm.~4.1.5]{colliot:santa_barbara}}]
\label{prop:Pn}
Let $M : \Local_k \to \Ab$ be a functor satisfying the following
conditions:
\begin{itemize}
\item If $\OO$ is a discrete valuation ring containing $k$, with
fraction field $K$ and residue field $\kappa$, then $\ker\bigl(M(\OO)
\to M(K)\bigr) \subset \ker\bigl(M(\OO) \to M(\kappa)\bigr)$.

\item If $A$ is a regular local ring of dimension 2 containing $k$,
with fraction field $K$, then $\im\bigl(M(A) \to M(K)\bigr) = 
\bigcap_{\mathrm{ht}(\mathfrak{p})=1} \im\bigl(M(A_{\mathfrak{p}}) \to M(K)\bigr)$.

\item The group $M_\ur(\A^1_{k}/k)$ is universally trivial.
\end{itemize}
Then $M_\ur(\P^n_{k}/k)$ is universally trivial. 
\end{prop}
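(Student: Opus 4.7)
The plan is to proceed by induction on $n$. For the base case $n = 1$, let $F/k$ be any extension and let $\alpha \in M_\ur(\P^1_F/F)$. Since $\P^1_F$ and $\A^1_F$ share the function field $F(t)$, we have $M_\ur(\P^1_F/F) = M_\ur(\A^1_F/F)$ by definition, and the third hypothesis places $\alpha$ in the image of $M(F) \to M(F(t))$.

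For the inductive step, fix $n \geq 2$, assume the result in dimension $n-1$, and let $\alpha \in M_\ur(\P^n_F/F)$. Set $F' := F(\P^{n-1})$ and identify $F(\P^n) = F'(t)$ via a linear projection $\P^n_F \dashrightarrow \P^{n-1}_F$. Every DVR of $F(\P^n)$ containing $F'$ a fortiori contains $F$, so $\alpha \in M_\ur(\P^1_{F'}/F')$, and the base case over $F'$ yields some $\alpha_0 \in M(F')$ mapping to $\alpha$. It remains to show $\alpha_0 \in M_\ur(\P^{n-1}_F/F)$, for then the inductive hypothesis lifts $\alpha_0$, and hence $\alpha$, to the image of $M(F)$.

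For this descent, let $\OO \subset F'$ be any DVR containing $F$, with uniformizer $\pi$, and let $A := \OO[t]_{(\pi,t)}$ be the associated 2-dimensional regular local $F$-algebra, with fraction field $F(\P^n)$. Each height-one localization $A_\mathfrak{p}$ is a DVR of $F(\P^n)$ containing $F$, so $\alpha \in \im(M(A_\mathfrak{p}) \to M(F(\P^n)))$; the second hypothesis then produces a lift $\tilde\alpha \in M(A)$. Let $\beta \in M(\OO)$ be the image of $\tilde\alpha$ under $A \twoheadrightarrow A/(t) = \OO$; the claim is that $\beta$ maps to $\alpha_0$ in $M(F')$. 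Focus on the height-one DVR $A_{(t)}$: its fraction field is $F(\P^n)$ and its residue field $\Frac(A/(t))$ is $F'$, and the inclusion $F' \hookrightarrow A_{(t)}$ obtained by inverting $\pi$ splits the residue map. Consider two elements of $M(A_{(t)})$: the image $\alpha_1$ of $\alpha_0$ under this section, and $\tilde\alpha|_{A_{(t)}}$. Both map to $\alpha \in M(F(\P^n))$, so by the first hypothesis their residues in $M(F')$ agree. The residue of $\alpha_1$ is $\alpha_0$ (since residue composed with section is the identity), while that of $\tilde\alpha|_{A_{(t)}}$ equals the image of $\beta$ in $M(F')$ (because the ring maps $A \twoheadrightarrow \OO \hookrightarrow F'$ and $A \to A_{(t)} \twoheadrightarrow F'$ both send $f(t) \mapsto f(0)$, hence coincide). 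Thus $\beta \mapsto \alpha_0$, completing the descent.

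The principal obstacle is this descent step: unramifiedness on $\P^n$ transfers painlessly to the generic $\P^1$-fiber, but producing from $\alpha_0$ an unramified class on $\P^{n-1}$ is not formal. It requires both the purity input of the second hypothesis (to obtain a lift $\tilde\alpha \in M(A)$) and a careful residue comparison at the DVR $A_{(t)}$ (using the first hypothesis) to identify the specialization $\beta \in M(\OO)$ with an actual lift of $\alpha_0$.
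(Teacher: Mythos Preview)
The paper does not supply a proof of this proposition; it is stated with a citation to \cite[\S2~and~Thm.~4.1.5]{colliot:santa_barbara} and then immediately applied. Your argument is correct and is precisely the standard inductive proof given in that reference: reduce to the generic $\P^1$-fiber via homotopy invariance (the third hypothesis), then use the codimension-two purity property (the second hypothesis) together with the specialization property (the first hypothesis) to show that the resulting class $\alpha_0$ on $F' = F(\P^{n-1})$ is itself unramified.

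One small remark on the formalism. In the descent step you pass from $M(A)$ to $M(A_{(t)})$ along the localization map $A \to A_{(t)}$, which is \emph{not} a local homomorphism. This is harmless here: the very definition of $M_\ur$ in the paper already uses the maps $M(\OO) \to M(K)$ for $K = \Frac(\OO)$, which are likewise not local, so the phrase ``local $k$-algebra homomorphisms'' in the description of $\Local_k$ must be read as ``$k$-algebra homomorphisms between local $k$-algebras'' rather than ``local ring maps.'' With that reading (which is the one forced by the context, and is the convention in \cite{colliot:santa_barbara}), every map you invoke is a morphism in $\Local_k$ and your residue comparison at $A_{(t)}$ goes through exactly as written.
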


The functor $\Het^i(-,\mu)$ satisfies the conditions of
Proposition~\ref{prop:Pn} (cf.\
\cite[Thm.~4.1.5]{colliot:santa_barbara}), hence if $X$ is a
$k$-rational variety, then $\Hur^i(X/k,\mu)$ is universally trivial.

Denote by $\Ab^{\bullet}$ the category of graded abelian groups.  An
important class of functors $M : \Local_k \to \Ab^{\bullet}$ arise
from the \linedef{cycle-modules} of
Rost~\cite[Rem.~5.2]{rost:cycle_modules}.  In particular, unramified
cohomology arises from a cycle-module.  A cycle module $M$ comes
equipped with residue maps of graded degree $-1$
$$
M^i(k(X)) \mapto{\partial} \bigoplus_{x \in X^{(1)}} M^{i-1}(k(x)) 
$$
for any integral $k$-variety $X$.
If $X$ is smooth and proper, then the kernel
  is    $M^i_{\ur}(X/k)$.  

\subsection{Chow groups of 0-cycles}
Denote by $\CH_d(X)$ the Chow group of $d$-cycles on a smooth variety
$X$ over a field $k$ up to rational equivalence.  If $X$ is proper
over $k$, then there is a well-defined degree map $\CH_0(X) \to \Z$,
and we denote by $A_0(X)$ its kernel, called the Chow group of
0-cycles of degree 0.  The group $A_0(X)$ is a $k$-birational
invariant of smooth, proper, integral varieties over a field $k$, see
\cite[Prop.~6.3]{colliot_coray} requiring resolution of singularities
and \cite[Ex.~16.1.11]{fulton:intersection_theory} in general.  The
computation of the Chow groups of projective space goes back to
Severi.  For 0-cycles, one easily sees $A_0(\P^n_k) = 0$.

For $X$ a proper $k$-variety, we say that $A_0(X)$ is
\linedef{universally trivial} if $A_0(X_F)=0$ for every field
extension $F/k$.  To check triviality of $A_0(X_F)$ over every field
extension $F/k$ seems like quite a burden.  However, usually it
suffices to check it over the function field by the following well
known lemma.

\begin{lemma}
\label{lem:univ_triv}
Let $X$ be a geometrically irreducible smooth proper variety over a
field $k$.  Assume that $X$ has a 0-cycle of degree 1.  The group
$A_0(X)$ is universally trivial if and only if $A_0(X_{k(X)})=0$.
\end{lemma}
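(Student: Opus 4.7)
The plan is to prove the forward implication by immediate specialization and the converse via a decomposition of the diagonal.  For the easy direction, if $A_0(X_F) = 0$ for every field extension $F/k$, specialize to $F = k(X)$.  So suppose $A_0(X_{k(X)}) = 0$, fix a $0$-cycle $z_0$ on $X$ of degree $1$, and set $n = \dim X$.

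First I would consider the codimension-$n$ classes $[\Delta]$ and $[X \times z_0]$ in $\CH^n(X \times_k X)$.  Restricting to the generic fiber of the first projection $p_1 \colon X \times X \to X$ yields a map to $\CH^n(X_{k(X)}) = \CH_0(X_{k(X)})$ sending $[\Delta]$ to the class of the ``diagonal point'' (a closed point of residue field $k(X)$) and $[X \times z_0]$ to the base change $(z_0)_{k(X)}$; both are $0$-cycles of degree $1$.  Their difference therefore lies in $A_0(X_{k(X)})$, which vanishes by hypothesis.  The excision exact sequence for Chow groups then produces a proper closed subvariety $D \subsetneq X$ and a class $W \in \CH^n(X \times X)$ supported on $D \times X$ with
$$
[\Delta] = [X \times z_0] + W \qquad \text{in } \CH^n(X \times X).
$$

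To deduce universal triviality, let $F/k$ be any field extension and $z \in A_0(X_F)$.  Applying the base change of this identity as an equality of correspondences from $X_F$ to itself acting on $\CH_0(X_F)$ and using that $[\Delta_F]$ acts as the identity, I obtain
$$
z \;=\; (X_F \times z_{0,F})_*(z) + W_{F,*}(z) \;=\; \deg(z) \cdot z_{0,F} + W_{F,*}(z),
$$
whose first summand vanishes because $\deg(z) = 0$.  For the second, the moving lemma for $0$-cycles on a smooth variety lets me represent $z$ by a cycle supported on the open complement $X_F \setminus D_F$; then $p_1^*(z)$ has support disjoint from the support $D_F \times X_F$ of $W_F$, so $W_F \cdot p_1^*(z) = 0$ and therefore $W_{F,*}(z) = 0$.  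Thus $z = 0$, proving $A_0(X_F) = 0$.

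The main technical input is the moving lemma allowing a $0$-cycle on a smooth variety to be represented off an arbitrary proper closed subvariety; the remaining steps are a formal application of the excision sequence together with the standard formula $(X_F \times z_{0,F})_*(z) = \deg(z) \cdot z_{0,F}$ for the correspondence action of $X \times z_0$ on $0$-cycles, and the observation that the hypothesis on the existence of a $0$-cycle of degree $1$ is used precisely to produce the class $[X \times z_0]$.
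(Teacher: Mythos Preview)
Your proof is correct and is essentially the same argument as the paper's: both obtain a decomposition of the diagonal from the vanishing of $A_0(X_{k(X)})$ via the limit/excision description of $\CH^n(X_{k(X)})$, then let the three pieces act as correspondences on $\CH_0(X_F)$ and kill the ``bad'' piece using the easy moving lemma for $0$-cycles. The only difference is cosmetic: you take the generic fiber along $p_1$ and use the convention $T_*(z)=(p_2)_*(T\cdot p_1^*z)$, so your extra term $W$ is supported on $D\times X$; the paper takes the generic fiber along the other factor and uses $T_*(z)=(p_1)_*(T\cdot p_2^*z)$, so its extra term $Z$ is supported on $X\times V$. Up to this swap of factors the two proofs coincide.
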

\begin{proof}
If $A_0(X)$ is universally trivial then $A_0(X_{k(X)})=0$ by
definition.  Let us prove the converse.  Write $d=\dim(X)$.  Let $\xi
\in X_{k(X)}$ be the $k(X)$-rational point which is the image of the
``diagonal morphism'' $\Spec k(X) \to X \times_k \Spec k(X)$.  Let $P$
be a fixed 0-cycle of degree 1 on $X$.  By hypothesis, we have $\xi =
P_{k(X)}$ in $\CH_0(X_{k(X)})$.  The closures of $P_{k(X)}$ and $\xi$
in $X \times_k X$ are $P \times_k X$ and the diagonal $\Delta_X$,
respectively.  By the closure in $X\times_{k}X$ of a 0-cycle on
$X_{k(X)}$, we mean the sum, taken with multiplicity, of the closures
of each closed point in the support of the 0-cycle on $X_{k(X)}$.
Hence the class of $\Delta_X - P \times_k X$ is in the kernel of the
contravariant map $\CH^d(X \times_k X) \to \CH^d(X_{k(X)})$.  Since
$\CH^d(X_{k(X)})$ is the inductive limit of $\CH^d(X \times_k U)$ over
all dense open subvarieties $U$ of $X$, we have that $\Delta_X - P
\times_k X$ vanishes in some $\CH^d(X \times_k U)$.  We thus have a
decomposition of the diagonal
\begin{equation}
\label{eq:decomp_diagonal}
\Delta_X = P \times_k X + Z
\end{equation}
in $\CH^d(X \times_k X)$, where $Z$ is a cycle with support in $X
\times V$ for some proper closed subvariety $V \subset X$.  

Now, each $d$-cycle $T$ on $X \times_k X$ induces a homomorphism $T_*
: \CH_0(X) \to \CH_0(X)$ defined by $T_*(z) = (p_1)_*(T. p_2^*z)$,
where $p_i : X \times_k X \to X$ are the two projections.  The map $T
\mapsto T_*$ is itself a homomorphism
$$
\CH^d(X \times_k X) = \CH_d(X \times_k X) \to \Hom_\Z(\CH_0(X),\CH_0(X))
$$
by \cite[Cor.~16.1.2]{fulton:intersection_theory}.  We note that
$(\Delta_X)_*$ is the identity map and $(P \times_k X)_*(z) = \deg(z)
P$.  By the easy  moving lemma for 0-cycles on a smooth variety (cf.\ \cite[p.~599]{colliot:finitude}),
 for a proper closed subvariety
$V \subset X$, every 0-cycle on $X$ is rationally equivalent to one
with support away from $V$.  This implies that $Z_* = 0$ for any
$d$-cycle with support on $X \times_k V$ for a proper closed
subvariety $V \subset X$.  Thus if $A_0(X_{k(X)})=0$, then by the
decomposition of the diagonal \eqref{eq:decomp_diagonal}, we see that
the identity map restricted to $A_0(X)$ is zero.

For any field extension $F/k$, we have the base-change $\Delta_{X_F} =
P_F \times_F X_F + Z_F$ of the decomposition of the diagonal
\eqref{eq:decomp_diagonal}, hence the same argument as above shows
that $A_0(X_F)=0$.  We conclude that $A_0(X)$ is universally trivial.
\end{proof}

Let $M$ be a cycle module and let $X$ be a smooth, proper,
geometrically connected variety over the field $k$.  Let $N$ be a
positive integer.  We say that $M_\ur(X/k)$ is \linedef{universally
$N$-torsion} if the cokernel of the natural map $M(F) \to
M_\ur(X_{F}/F)$ is killed by $N$ for every field extension $F/k$. We
say that $A_0(X)$ is \linedef{universally $N$-torsion} if $A_0(X_F)$
is killed by $N$ for every field extension $F/k$.

The \linedef{index} $i(X)$ of a variety $X$ is the
smallest positive degree of a 0-cycle.
 
\begin{theorem}
\label{thm:Merk_N}
Let $X$ be a smooth proper geometrically connected variety over a
field $k$.  Let $N>0$ be an integer.  If the Chow group $A_0(X)$ of
0-cycles of degree 0 is universally $N$-torsion then for every cycle
module $M$ over $k$, the group $M_\ur(X/k)$ is universally $i(X)
N$-torsion.
\end{theorem}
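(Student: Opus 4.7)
The plan is to adapt the decomposition-of-the-diagonal argument from the proof of Lemma~\ref{lem:univ_triv}, applying it via the action of correspondences on a cycle module $M$ in the sense of Rost~\cite[\S14]{rost:cycle_modules}. Let $d=\dim X$ and fix a 0-cycle $P$ on $X$ of degree $i(X)$. Writing $\xi$ for the canonical $k(X)$-rational point of $X_{k(X)}$ coming from the diagonal, the class $i(X)\,\xi - P_{k(X)}$ lies in $A_0(X_{k(X)})$ and is killed by $N$ by hypothesis. Taking closures in $X \times_k X$ exactly as in the proof of Lemma~\ref{lem:univ_triv} lifts the resulting equality $Ni(X)\,\xi = NP_{k(X)}$ to a decomposition
\[
Ni(X)\,[\Delta_X] \;=\; N\,[P \times_k X] \,+\, [Z]
\]
in $\CH^d(X \times_k X)$, with $Z$ a $d$-cycle supported on $X \times_k V$ for some proper closed subvariety $V \subsetneq X$. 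This decomposition is preserved under base change to any field extension $F/k$.

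Next, I would use the action of $\CH^d(X_F \times_F X_F)$ on $M_\ur(X_F/F)$ given by $T_\ast \alpha = (p_2)_\ast(p_1^\ast \alpha \cdot T)$, which satisfies $(\Delta_{X_F})_\ast = \id$ and is linear in $T$. The correspondence $P_F \times_F X_F$ acts on $\alpha \in M_\ur(X_F/F)$ as ``restrict $\alpha$ to the zero-dimensional closed subscheme $P_F$, corestrict the result to $\Spec F$, and pull back to $X_F$''; in particular, its output lies in the image of the natural map $M(F) \to M_\ur(X_F/F)$.

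The main obstacle is to justify that $Z_{F\ast}\alpha = 0$. Since $Z_F$ is supported on $X_F \times_F V_F$ with $V_F \subsetneq X_F$, the pushforward $(p_2)_\ast(p_1^\ast\alpha \cdot Z_F)$ is supported on the proper closed subvariety $V_F$ of $X_F$. Viewing $M_\ur(X_F/F) \subset M(F(X))$ as in the definition, any class represented by a pushforward supported on $V_F$ vanishes at the generic point of $X_F$, hence is zero in $M(F(X))$; this is the cycle-module analogue of the moving-lemma step used in the proof of Lemma~\ref{lem:univ_triv}. Combining this vanishing with the previous step, applied to the base-changed diagonal decomposition, yields
\[
Ni(X)\cdot\alpha \;=\; N\,(P_F \times_F X_F)_\ast\alpha \;\in\; \im\bigl(M(F) \to M_\ur(X_F/F)\bigr)
\]
for every $\alpha \in M_\ur(X_F/F)$ and every field extension $F/k$, which is the desired universal $i(X)N$-torsion of $M_\ur(X/k)$.
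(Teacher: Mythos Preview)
Your proposal is correct and follows essentially the same route as the paper: the paper's proof simply cites Lemma~\ref{lem:univ_triv} together with \cite[Prop.~RC.8]{KMApp}, and what you have written is precisely an unpacking of that reference---the $Ni(X)$-decomposition of the diagonal coming from the argument of Lemma~\ref{lem:univ_triv}, followed by the correspondence action on $A^0(X;M)=M_\ur(X/k)$ from Rost's formalism, with the vanishing of $Z_{F*}$ on the generic point being exactly the content of the cited proposition.
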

\begin{proof} 
This is a direct consequence of Lemma~\ref{lem:univ_triv} and
\cite[Prop.~RC.8]{KMApp}.
\end{proof}

The case $N=1$ is a generalization of a theorem of
Merkurjev~\cite[Thm.~2.11]{merkurjev:unramified_cycle_modules}, who
also establishes a converse statement. For any $N>0$, one may extend
Merkurjev's method to prove a weak converse:\ if $M_{\ur}(X/k)$ is
universally $N$-torsion for every cycle module $M$ then $A_0(X)$ is
also universally $N$-torsion.

We point out that the appearance of the index of $X$, in the statement
of Theorem~\ref{thm:Merk_N}, is necessary.  For any quadric $X$ over
any field $k$ of characteristic $\neq 2$, we have that $A_0(X)$ is
universally trivial (see \cite{swan:zero-cycles_quadrics}).  However,
if $X$ is the 4-dimensional quadric associated to an anisotropic
Albert form $X$ over a field $k$ of characteristic $\neq 2$, then
$\coker\bigl( H^3(F,\Q/\Z(2)) \to \Hur^3(X/k,\Q/\Z(2)) \bigr)\isom
\Z/2\Z$ by \cite[Thm.~5]{kahn_rost_sujatha}.  Hence $A_0(X)$ is
universally trivial, while there are nontrivial unramified elements of
some cycle module over $X$.  Note that the index of an anisotropic
quadric is 2.

The following lemmas will be used in the next section.

\begin{lemma}
\label{lem:finite_sufficient}
Let $X$ be a proper variety over a field $k$ with $X(k) \neq
\emptyset$.  Let $N>0$ be an integer.  If for every finite
extension $K/k$ and any two $K$-points $P,Q \in X(K)$
the class of the  0-cycle $P-Q$
is $N$-torsion in $A_{0}(X_{K})$,  then $A_0(X)$ is $N$-torsion.
\end{lemma}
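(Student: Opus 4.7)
The plan is to reduce an arbitrary $0$-cycle of degree $0$ to a sum of differences of the form $[x] - [k(x):k]\,[P_0]$, where $P_0 \in X(k)$ is a fixed rational point (which exists by hypothesis). Given any such $[x] - [k(x):k]\,[P_0]$, I will handle it by base-changing to the residue field $K = k(x)$ and applying the hypothesis there.

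Concretely, fix $P_0 \in X(k)$. Let $x \in X$ be a closed point with residue field $K = k(x)$, and let $\pi : X_K \to X$ be the projection. The structure morphism $\Spec K \to X$ factors through a canonical $K$-point $P \in X_K(K)$ sitting above $x$, while $P_0$ base-changes to a $K$-point $P_0{}_K \in X_K(K)$. By hypothesis applied to the finite extension $K/k$ and the two $K$-points $P, P_0{}_K \in X_K(K)$, the class $[P] - [P_0{}_K]$ is $N$-torsion in $A_0(X_K)$. Pushing forward along the finite flat morphism $\pi$, using $\pi_*[P] = [x]$ and $\pi_*[P_0{}_K] = [K:k]\,[P_0]$ in $\CH_0(X)$, I conclude that
\[
N\bigl([x] - [K:k]\,[P_0]\bigr) = 0 \quad \text{in } \CH_0(X).
\]

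To finish, let $z = \sum n_i [x_i]$ be an arbitrary $0$-cycle of degree $0$ on $X$, with each $x_i$ a closed point of residue degree $d_i = [k(x_i):k]$, so that $\sum n_i d_i = 0$. Then
\[
N z \;=\; \sum_i n_i\, N\bigl([x_i] - d_i\,[P_0]\bigr) \;+\; N\Bigl(\sum_i n_i d_i\Bigr) [P_0],
\]
and both terms vanish in $\CH_0(X)$: the first by the previous step applied to each $x_i$, and the second because $\deg(z) = 0$. Hence $z$ is $N$-torsion in $A_0(X)$.

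There is no real obstacle here: the only content is the projection formula / pushforward computation $\pi_*[P] = [x]$, which lets one transfer the hypothesis from the base change $X_K$ back to $X$. The rational point $P_0$ is needed only to normalize closed points into classes of degree $0$ so that the hypothesis applies to genuine elements of $A_0$.
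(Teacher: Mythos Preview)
Your proof is correct and follows essentially the same approach as the paper: fix a rational point, reduce to cycles of the form $[x]-\deg(x)[P_0]$, base-change to the residue field $K=k(x)$, apply the hypothesis to the canonical $K$-point over $x$ and the base-changed $P_0$, and push forward. The only cosmetic difference is that the paper justifies the existence of the $K$-point over $x$ via the direct factor $K$ of $K\otimes_k K$, whereas you use the universal property of the fiber product; these are the same thing.
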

\begin{proof}
Fixing $P \in X(k)$, any element of $A_0(X)$ can be written as a
linear combination of 0-cycles of degree 0 of the form $Z - \deg(Z)P$,
where $Z$ is a closed point of $X$.  Let $K$ be the residue field of
$Z$.  Consider the morphism $f : X_K \to X$.  Since $K \tensor_k K$
has $K$ as a direct factor, there is a corresponding $K$-rational
point $\zeta$ of $X_K$ lying over $Z$, such that $f_* \zeta = P$.  By
hypothesis $\zeta - P_K$ is $N$-torsion, hence $f_*(\zeta - P_K) =
Z-\deg(Z)P$ is $N$-torsion.
\end{proof}

\begin{lemma}
\label{lem:inject}
Let $k$ be an algebraically closed field and $K/k$ a field extension.
Let $X$ be a smooth projective connected variety over $k$.
Then the natural map $\CH_0(X) \to \CH_0(X_K)$ is injective.
\end{lemma}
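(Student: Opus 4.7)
The plan is to exploit the fact that a $k$-algebraically closed gives us a ready supply of $k$-rational points on any nonempty variety, so we can specialize a family of 0-cycles parametrized by the extension $K$ back to a 0-cycle on $X$ itself.

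First I would reduce to the case that $K/k$ is finitely generated. Since the Chow group of zero-cycles commutes with filtered colimits of schemes along flat transition morphisms (cf.\ \cite[\S1.9]{fulton:intersection_theory}), and since $X_{K} = \varprojlim X_{L}$ over finitely generated subextensions $L/k$ in $K$, we have $\CH_{0}(X_{K}) = \varinjlim_{L} \CH_{0}(X_{L})$. So if $\alpha \in \CH_{0}(X)$ dies in $\CH_{0}(X_{K})$, it already dies in $\CH_{0}(X_{L})$ for some such $L$, and it suffices to establish injectivity of $\CH_{0}(X) \to \CH_{0}(X_{L})$.

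Next, write $L = k(Y)$ for some integral $k$-variety $Y$; by generic smoothness and the fact that $k$ is algebraically closed, after replacing $Y$ by a nonempty open subset we may assume $Y$ is smooth of dimension $m$ over $k$. Use the standard localization description
$$
\CH_{0}(X_{k(Y)}) = \varinjlim_{\emptyset \neq U \subset Y} \CH_{m}(X \times_{k} U),
$$
the limit over dense open $U \subset Y$, with the transition maps being flat restriction. Under this identification, the base change map $\CH_{0}(X) \to \CH_{0}(X_{k(Y)})$ factors through the flat pullback $\pi^{*} : \CH_{0}(X) \to \CH_{m}(X \times_{k} U)$ along the projection $\pi : X \times_{k} U \to X$, namely $\alpha \mapsto \alpha \times U$.

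Suppose $\alpha_{k(Y)} = 0$. Then after shrinking $U$ we may assume $\alpha \times U = 0$ in $\CH_{m}(X \times_{k} U)$. Since $k$ is algebraically closed and $U$ is a nonempty smooth $k$-variety, by the Nullstellensatz $U$ has a $k$-rational point $y_{0}$. The inclusion $\{y_{0}\} \hookrightarrow U$ is a regular closed embedding of codimension $m$, so it induces a refined Gysin pullback $j^{*} : \CH_{m}(X \times_{k} U) \to \CH_{0}(X)$ where $j : X \hookrightarrow X \times_{k} U$ is the inclusion of the fiber over $y_{0}$. Since $\pi \circ j = \mathrm{id}_{X}$, the composition $j^{*} \circ \pi^{*}$ is the identity on $\CH_{0}(X)$. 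Applying this to the vanishing class $\alpha \times U$ gives $\alpha = 0$, which is the desired injectivity.

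The main subtlety, beyond bookkeeping, is the need for $U$ to be smooth at $y_0$ so that the Gysin pullback $j^*$ is defined; this is where both algebraic closedness of $k$ (to secure $y_0 \in U(k)$) and generic smoothness on $Y$ (to secure regularity at $y_0$) enter in an essential way.
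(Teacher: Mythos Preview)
Your argument is correct and follows the same strategy as the paper: reduce to a finitely generated extension, spread out over an integral $k$-variety, and specialize at a $k$-rational point (which exists because $k$ is algebraically closed). The paper's proof is a two-line sketch that phrases the specialization informally (``spread out the rational equivalence \dots\ specialize the rational equivalence''), whereas you carry it out explicitly via the Gysin pullback along a regular closed point; the content is the same.
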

\begin{proof}
Let $z$ be a 0-cycle on $X$ that becomes rationally equivalent to zero
on $X_K$.  Then there exists a subextension $L$ of $K/k$ that is
finitely generated over $k$, such that $z$ becomes rationally
equivalent to zero on $X_L$. In fact, we can spread out the rational
equivalence to a finitely generated $k$-algebra $A$.  Since $k$ is
algebraically closed, there are many $k$-points on $\Spec A$, at which
we can specialize the rational equivalence.
\end{proof}

\begin{lemma}\label{infinitetranscendance}
\label{lem:absolute_gives_N}
Let $X$ be a smooth proper connected variety over an algebraically
closed field $k$ of infinite transcendence degree over its prime
field.  If $A_0(X)=0$ then there exists an integer $N>0$ such that
$A_0(X)$ is universally $N$-torsion.
\end{lemma}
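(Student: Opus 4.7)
The plan is to reduce the statement to a decomposition of the diagonal, in the spirit of the proof of Lemma~\ref{lem:univ_triv}. Fix a rational point $P \in X(k)$ (which exists because $k$ is algebraically closed), set $L = k(X)$, and let $\xi \in X_L(L)$ denote the canonical ``diagonal'' $L$-point of $X_L$. If one can produce an integer $N>0$ with $N \cdot (\xi - P_L) = 0$ in $A_0(X_L)$, then spreading this rational equivalence to a dense open $U \subset X$ and closing up produces
\begin{equation*}
N \Delta_X = N (P \times_k X) + Z \text{ in } \CH^d(X \times_k X),
\end{equation*}
with $Z$ supported in $X \times V$ for $V := X \smallsetminus U$. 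Base-changing to any $F/k$ and acting by this correspondence on $A_0(X_F)$, the middle term annihilates degree-zero cycles and $Z_F$ acts as zero by the moving lemma for $0$-cycles on the smooth $X_F$, so $N \cdot A_0(X_F) = 0$. It thus suffices to produce such an $N$.

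My plan for this is to pass to $\ol{L} := \overline{L}$ and prove $A_0(X_{\ol{L}}) = 0$. Granted this, $\xi - P_{\ol{L}} = 0$ in $A_0(X_{\ol{L}})$, so the rational equivalence is defined over some finite subextension $L'/L$ of $\ol{L}/L$, giving $\xi - P_{L'} = 0$ in $A_0(X_{L'})$. The standard identity $\cores_{L'/L} \circ \res_{L'/L} = [L':L]$ for $0$-cycles then yields $[L':L] \cdot (\xi - P_L) = 0$, and I take $N = [L':L]$.

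To prove $A_0(X_{\ol{L}}) = 0$ I would go through a countable model and apply Lemma~\ref{lem:inject} in a clever way. Since $X$ is of finite type over $k$, it descends to a smooth projective connected variety $X_0 / k_0$ for some countable algebraically closed subfield $k_0 \subset k$ (for instance the algebraic closure in $k$ of a finitely generated subfield of definition). Because $k$ has infinite transcendence degree over the prime field while $k_0$ has finite transcendence degree, $k / k_0$ has infinite transcendence degree, of the same (infinite) cardinality as $\operatorname{trdeg}(\ol{L}/k_0) = \operatorname{trdeg}(k/k_0) + \dim X$. Hence a transcendence basis of $\ol{L}/k_0$ can be mapped to an algebraically independent subset of $k$ over $k_0$, and this extends (using that $k$ is algebraically closed) to a $k_0$-embedding $\iota : \ol{L} \hookrightarrow k$. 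Through $\iota$ the field $\ol{L}$ becomes an algebraically closed subfield of $k$, and Lemma~\ref{lem:inject} applied to $\ol{L} \subset k$ and the smooth projective connected variety $X_{0, \ol{L}}$ yields
\begin{equation*}
A_0(X_{\ol{L}}) = A_0(X_{0, \ol{L}}) \hookrightarrow A_0(X_{0, k}) = A_0(X) = 0.
\end{equation*}

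The main obstacle is producing the embedding $\iota$: it cannot be built over $k$ itself (since $\ol{L}/k$ has positive transcendence degree while $k$ is algebraically closed), which is exactly why the detour through a countable subfield $k_0$ is forced and why the infinite transcendence degree of $k$ over its prime field is essential. Once $\iota$ is in hand the rest is formal: identification of base changes, the restriction--corestriction descent from $\ol{L}$ down to $L$, and the diagonal-decomposition spreading.
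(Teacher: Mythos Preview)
Your proof is correct and follows essentially the same strategy as the paper's: descend $X$ to a model $X_0$ over a small algebraically closed subfield, exploit the infinite transcendence degree of $k$ to embed (an algebraic closure of) the relevant function field into $k$, invoke Lemma~\ref{lem:inject} to get vanishing of $A_0$ there, descend via restriction--corestriction to a finite level, and finish with the diagonal decomposition as in Lemma~\ref{lem:univ_triv}.

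The only organizational difference is where the function field is taken. The paper works with the diagonal point of the small model $X_0$ and embeds $F = k_0(X_0)$ (a field of finite transcendence degree over the prime field) into $k$; this is slightly more economical, since one only needs $\operatorname{trdeg}(k/k_0) \geq \dim X$, and no cardinality bookkeeping is required. You instead work with the diagonal point of $X$ itself and embed the larger field $\overline{k(X)}$ into $k$ over $k_0$, which forces the extra (but entirely valid) cardinal-arithmetic step $\operatorname{trdeg}(\overline{k(X)}/k_0) = \operatorname{trdeg}(k/k_0) + \dim X = \operatorname{trdeg}(k/k_0)$. Both routes yield the same $N$-decomposition of the diagonal, just assembled at different levels of the tower.
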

\begin{proof}
The variety $X$ is defined over an algebraically closed subfield $L
\subset k$, with $L$ algebraic over a field finitely generated over
its prime field.  That is, there exists a variety $X_0$ over $L$ with
$X \isom X_0 \times_L k$.  Let $\eta$ be the generic point of $X_0$.
Let $P$ be an $L$-point of $X_0$.  One may embed the function field
$F=L(X_0)$ into $k$, by the transcendence degree hypothesis on $k$.
Let $K$ be the algebraic closure of $F$ inside $k$.  By
Lemma~\ref{lem:inject} and the hypothesis that $A_0(X)=0$, we have
that $A_0(X_0 \times_L F)=0$.  This implies that there is a finite
extension $E/F$ of fields such that $\eta_E - P_E =0$ in $A_0(X_0
\tensor_L E)$.  Taking the corestriction to $F$, one finds that
$N(\eta_F-P_F)=0$ in $A_0(X_0\times_L F)$, hence in $A_0(X)$ as well.
Arguing as in the proof of Lemma~\ref{lem:univ_triv}, we conclude that
$A_0(X)$ is universally $N$-torsion.
\end{proof}

\bigskip

\subsection{Connections with complex geometry}

The universal torsion of $A_0(X)$ puts strong restrictions on the
variety $X$.  For example, the following result is well known.

\begin{prop}
\label{prop:bloch}
Let $X$ be a smooth proper geometrically irreducible variety over a
field $k$ of characteristic zero.  If $A_0(X)$ is universally
$N$-torsion for some positive integer $N$ then,
$H^0(X,\Omega^{i}_{X})=0$ and $H^i(X,\OO_X)=0$ for all $i \geq 1$.
\end{prop}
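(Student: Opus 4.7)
The plan is to reduce to the case when $k$ is algebraically closed and then run a Bloch--Srinivas style decomposition-of-the-diagonal argument, tracing its action on coherent cohomology. First I would observe that both the hypothesis and the conclusion are compatible with flat base change: universal $N$-torsion of $A_0$ is preserved under extension of the base field (every $L/k'$ is an extension of $k$), while coherent cohomology satisfies $H^j(X,\Omega^i_X) \tensor_k k' \isom H^j(X_{k'},\Omega^i_{X_{k'}})$. We may therefore assume that $k$ is algebraically closed of characteristic zero, in which case $X$ automatically admits a $k$-rational point $P$.

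Next I would produce a decomposition of the diagonal. Setting $F = k(X)$, the hypothesis yields $N(\xi - P_F) = 0$ in $A_0(X_F)$, where $\xi \in X_F$ is the tautological $F$-point. Spreading this equality over a dense open subvariety of $X$ and passing to closures, exactly as in the proof of Lemma~\ref{lem:univ_triv}, produces an equality
$$
N \Delta_X = N \cdot (P \times_k X) + Z
$$
in $\CH^d(X \times_k X)$, where $d = \dim X$ and $Z$ is supported on $X \times V$ for some proper closed subvariety $V \subsetneq X$.

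The final step is to apply both sides as correspondences acting on $H^q(X,\Omega^p_X)$ for $(p,q) = (0,i)$ or $(i,0)$ with $i \geq 1$. The diagonal acts as $N\cdot\id$. The correspondence $P \times_k X$ factors through the inclusion $\{P\} \hookrightarrow X$ on the first factor, so the action on a class $\alpha$ involves its pullback to $\{P\}$, which vanishes since $H^i(\{P\},\OO) = 0$ and $\Omega^i_{\{P\}} = 0$ for $i \geq 1$. For the error term $Z$, I would resolve $\pi : \tilde V \to V$ with $\dim \tilde V = e < d$ (using resolution of singularities in characteristic zero) and lift $Z$ to a codimension-$e$ cycle $\tilde Z$ on $X \times \tilde V$. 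The class $[\tilde Z]$ has Hodge type $(e,e)$, so the cup product $\tilde p_1^*(\alpha) \cup [\tilde Z]$ followed by Gysin pushforward along $\tilde p_2 : X \times \tilde V \to \tilde V$ of relative dimension $d$ lands in a Hodge group of type $(e-d, e+i-d)$ or $(e+i-d, e-d)$, which vanishes because $e < d$.

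Combining these, $N$ annihilates each of the $k$-vector spaces $H^i(X,\OO_X)$ and $H^0(X,\Omega^i_X)$ for $i \geq 1$, forcing them to vanish. The main technical step is the Hodge-type bookkeeping for the error term $Z$ together with the appeal to resolution of singularities and the verification that the correspondence action on coherent cohomology respects the bigrading; the reduction to algebraically closed $k$ and the decomposition of the diagonal are essentially formal consequences of what has already been established.
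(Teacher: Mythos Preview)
Your proposal is correct and follows precisely the Bloch--Srinivas decomposition-of-the-diagonal argument that the paper invokes. The paper itself does not give a self-contained proof but simply cites Bloch, Bloch--Srinivas, Voisin's book, and Esnault for exactly this method; your write-up supplies the details of that same approach.
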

\begin{proof}
Over a complex surface, the result goes back to
Bloch~\cite[App.~Lec.~1]{bloch:lectures}, by exploiting a
decomposition of the diagonal and the action of cycles on various
cohomology theories.  Aspects of this argument were developed in
\cite{bloch_srinivas}.  A proof over the complex numbers can be found
in \cite[Cor.~10.18,~\S10.2.2]{voisin:Hodge}.  Over a general field of
characteristic zero, the argument is sketched in
\cite[p.~187]{esnault:finite_field_trivial_Chow}.
\end{proof}

Over $\C$, the universal triviality of $A_0(X)$ does not imply $H^0(X,
\omega_X^{\tensor n})=0$ for all $n > 1$.  Otherwise, a surface over
$\C$ with $A_0(X)=0$ would additionally satisfy $P_2(X) =
h^0(X,\omega_X^{\tensor 2})=0$, hence would be rational by
Castelnovo's criterion.

It is however well known that there exist nonrational complex surfaces
$X$, with $p_g(X)=q(X)=0$ and for which $A_0(X)=0$.  Enriques surfaces
were the first examples, extensively studied in \cite{enriques},
\cite[p.~294]{enriques:memorie} with some examples considered earlier
in \cite{reye}, see also \cite{castelnuovo}.  For more examples, see
\cite{BKL}.  We remark that for an Enriques surface $X$, we have that
$\Hur^1(X/\C,\Z/2\Z)=H^1_{\et}(X,\Z/2\Z)=\Z/2\Z$.  Hence by
Theorem~\ref{thm:Merk_N}, we see that $A_{0}(X)$ is not universally
trivial.

\medskip

The following result was stated without detailed proof as the last
remark of \cite{bloch_srinivas}.  As we show, it is an immediate
consequence of a result in
\cite{colliot-thelene_raskind:second_Chow_group}.  A more geometric
proof was recently shown to us by C.~Voisin.

\begin{prop}
\label{surfacenotorsion}
Let $X$ be a smooth proper connected surface over $\C$.  Suppose that
all groups $H^i_{\mathrm{Betti}}(X(\C),\Z)$ are torsionfree and that
$A_0(X)=0$.  Then $A_0(X)$ is universally trivial.
\end{prop}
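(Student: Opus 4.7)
The plan is to reduce universal triviality to the single vanishing of $A_0(X_{\C(X)})$ at the generic point and then combine the $N$-torsion supplied by Lemma \ref{infinitetranscendance} with a torsion-freeness statement extracted from \cite{colliot-thelene_raskind:second_Chow_group}. Since $X(\C) \neq \emptyset$, Lemma \ref{lem:univ_triv} reduces universal triviality of $A_0(X)$ to the single assertion $A_0(X_{\C(X)}) = 0$. Because $\C$ has infinite transcendence degree over its prime field, the hypothesis $A_0(X) = 0$ together with Lemma \ref{infinitetranscendance} furnishes an integer $N > 0$ such that $A_0(X_F)$ is killed by $N$ for every field extension $F/\C$; in particular $A_0(X_{\C(X)})$ is annihilated by $N$. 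Applying Proposition \ref{prop:bloch} to this universal $N$-torsion moreover forces $H^i(X,\OO_X) = 0$ for all $i \geq 1$, so in particular $p_g(X) = q(X) = 0$.

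With these cohomological vanishings in hand, the next step is to invoke the analysis of $H^1(X,\sheaf{K}_2)$ and the second Chow group carried out in \cite{colliot-thelene_raskind:second_Chow_group}. Under the hypothesis $H^2(X,\OO_X) = 0$ together with torsion-freeness of $H^*_{\mathrm{Betti}}(X(\C),\Z)$, their results (combined with the Merkurjev--Suslin theorem identifying the relevant torsion with \'etale cohomology of a Tate twist) force the torsion subgroup of $A_0(X_F)$ to vanish for every field extension $F/\C$. In particular $A_0(X_{\C(X)})$ is torsion-free.

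Combining the two parts, $A_0(X_{\C(X)})$ is simultaneously $N$-torsion and torsion-free, hence zero. By Lemma \ref{lem:univ_triv} this yields the universal triviality of $A_0(X)$.

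The main obstacle is the second step: extracting from \cite{colliot-thelene_raskind:second_Chow_group} the torsion bound in a form that applies to the base-changed surface $X_F$ over an arbitrary extension $F/\C$, rather than merely to $X$ over $\C$. The subtlety is that the CTR exact sequences are cleanest for surfaces over algebraically closed fields, so one must check that the control of torsion in $A_0(X_F)$ by the Betti cohomology of $X/\C$ is genuinely uniform in $F$. The paper indicates that an alternative, more geometric route via Voisin is available: starting from the cohomological decomposition of the diagonal implied by $A_0(X) = 0$ (in the spirit of Bloch--Srinivas), one uses torsion-freeness of Betti cohomology to lift it to an honest decomposition in $\CH^2(X \times X)$, and concludes via the diagonal-decomposition argument used in the proof of Lemma \ref{lem:univ_triv}.
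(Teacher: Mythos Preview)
Your approach is essentially the same as the paper's: Lemma~\ref{infinitetranscendance} for universal $N$-torsion, Proposition~\ref{prop:bloch} for the cohomological vanishings, and then \cite{colliot-thelene_raskind:second_Chow_group} to finish. Two small remarks. First, the detour through Lemma~\ref{lem:univ_triv} is unnecessary: the Colliot-Th\'el\`ene--Raskind input (specifically \cite[Thm.~3.10(d)]{colliot-thelene_raskind:second_Chow_group}) already yields $A_0(X_F)=0$ for \emph{every} extension $F/\C$ directly, so there is no need to single out $F=\C(X)$. Second, the paper makes explicit one step you leave implicit, namely that $q(X)=0$ forces $b_1(X)=b_3(X)=0$; together with the torsion-freeness hypothesis this is exactly what feeds into \cite[Thm.~3.10(d)]{colliot-thelene_raskind:second_Chow_group} and resolves the ``obstacle'' you flag about uniformity in $F$.
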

\begin{proof}
By Lemma~\ref{lem:absolute_gives_N}, we have that $A_0(X)$ is
universally $N$-torsion.  Hence by Lemma~\ref{prop:bloch}, we have
that $H^i(X,\OO_X)=0$ for all $i \geq 1$.  Hence $p_g(X) = q(X) = 0$,
and thus $b_3(X) = b_1(X) = 2q(X)=0$.  The torsion-free hypothesis on
cohomology finally allows us conclude, from
\cite[Thm.~3.10(d)]{colliot-thelene_raskind:second_Chow_group}, that
$A_0(X_F)=0$ for any field extension $F/\C$.
\end{proof}

\begin{cor}\label{surfacenotorsionbis}
Let $X$ be a smooth proper connected surface over $\C$.  Suppose that
the N\'eron-Severi group $\mathrm{NS}(X)$ of $X$ is torsionfree and
that $A_{0}(X)=0$. Then $A_0(X)$ is universally trivial.
\end{cor}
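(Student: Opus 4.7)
The plan is to reduce the corollary to Proposition~\ref{surfacenotorsion} by showing that, under the stated hypotheses, the integral Betti cohomology of $X(\C)$ is torsion-free in every degree, so the stronger hypothesis of that proposition is satisfied.

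First, I would run exactly the first two steps of the proof of Proposition~\ref{surfacenotorsion}: since $\C$ has infinite transcendence degree over $\Q$, Lemma~\ref{lem:absolute_gives_N} applies and gives some $N>0$ for which $A_0(X)$ is universally $N$-torsion, and then Proposition~\ref{prop:bloch} yields $H^i(X,\OO_X)=0$ for all $i\geq 1$, i.e.\ $q(X)=p_g(X)=0$. In particular $b_1(X)=2q(X)=0$, and since $H^1(X(\C),\Z)=\Hom(H_1(X(\C),\Z),\Z)$ is always torsion-free, it vanishes; so $H_1(X(\C),\Z)$ is torsion.

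Next, I would use the exponential sequence on $X(\C)$. The vanishings $H^1(X,\OO_X)=H^2(X,\OO_X)=0$ give an isomorphism $\Pic(X)\isomto H^2(X(\C),\Z)$, and the vanishing $H^1(X,\OO_X)=0$ forces $\Pic^0(X)=0$, so $\mathrm{NS}(X)=\Pic(X)$. Hence $\mathrm{NS}(X)\isom H^2(X(\C),\Z)$, and the torsion-free hypothesis on $\mathrm{NS}(X)$ gives that $H^2(X(\C),\Z)$ is torsion-free. By the universal coefficient theorem, the torsion subgroup of $H^2(X(\C),\Z)$ equals the torsion subgroup of $H_1(X(\C),\Z)$; combined with the fact that $H_1(X(\C),\Z)$ is already torsion (from $b_1=0$), we conclude that $H_1(X(\C),\Z)=0$.

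Finally, Poincar\'e duality for the compact oriented 4-manifold $X(\C)$ gives $H^3(X(\C),\Z)\isom H_1(X(\C),\Z)=0$, so $H^3$ is torsion-free. The remaining groups $H^0$ and $H^4$ are $\Z$, and $H^1$ was already shown to be torsion-free. Thus all integral Betti cohomology groups of $X(\C)$ are torsion-free, and Proposition~\ref{surfacenotorsion} applies directly to conclude that $A_0(X)$ is universally trivial. The only step requiring care is the identification $\mathrm{NS}(X)\isom H^2(X(\C),\Z)$, which depends crucially on the vanishings $H^1(X,\OO_X)=H^2(X,\OO_X)=0$ obtained from Proposition~\ref{prop:bloch}; everything else is then formal from universal coefficients and Poincar\'e duality.
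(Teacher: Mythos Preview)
Your proof is correct and follows the same overall strategy as the paper: reduce to Proposition~\ref{surfacenotorsion} by showing that all integral Betti cohomology groups of $X(\C)$ are torsion-free, using the link between $\mathrm{NS}(X)$ and $H^2$ together with Poincar\'e duality.

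The one noteworthy difference is in how you reach the torsion-freeness of $H^2$. You first invoke Lemma~\ref{lem:absolute_gives_N} and Proposition~\ref{prop:bloch} to obtain $H^1(X,\OO_X)=H^2(X,\OO_X)=0$, and then use the exponential sequence to get the \emph{full} isomorphism $\mathrm{NS}(X)\isom H^2(X(\C),\Z)$. The paper instead uses the general fact (valid for any smooth proper connected complex variety, with no hypothesis on $A_0$) that $\mathrm{NS}(X)_{\mathrm{tors}}\isom H^2_{\mathrm{Betti}}(X(\C),\Z)_{\mathrm{tors}}$, which follows directly from the exponential sequence because $H^1(X,\OO_X)$ and $H^2(X,\OO_X)$ are $\C$-vector spaces (hence torsion-free and divisible) and $\Pic^0(X)$ is divisible. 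This lets the paper avoid re-running the opening steps of Proposition~\ref{surfacenotorsion}, which will be run again anyway when that proposition is applied. Your argument is thus slightly redundant but entirely sound.
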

\begin{proof} 
The group $H^1_{\mathrm{Betti}}(X(\C),\Z)$ is clearly torsionfree.
For any smooth proper connected variety $X$ over $\C$ of dimension $d$
there is an isomorphism $NS(X)_{tors} \simeq
H^2_{\mathrm{Betti}}(X(\C),\Z)_{tors}$, and the finite groups
$H^2_{\mathrm{Betti}}(X(\C),\Z)_{tors}$ and
$H^{2d-1}_{\mathrm{Betti}}(X(\C),\Z)_{tors}$ are dual to each other.
The hypotheses thus imply that for the surface $X$, all groups
$H^i_{\mathrm{Betti}}(X(\C),\Z)$ are torsionfree, and we apply
Proposition~\ref{surfacenotorsion}.
\end{proof}

The first surfaces $S$ of general type with $p_g(S)=q(S)=0$ were
constructed in \cite{campedelli} and \cite{godeaux}.  Simply connected
surfaces $X$ of general type for which $p_g(X)=0$ were constructed by
Barlow~\cite{barlow}, who also proved that $A_0(X)=0$ for some of
them.  See also the recent paper \cite{voisin:bloch}.  For such
surfaces, $\Pic(X) ={\rm NS}(X)$ has no torsion, thus Corollary
\ref{surfacenotorsionbis} applies.  The group $A_{0}(X)$ is
universally trivial, but the surfaces are far from being rational,
since they are of general type.

\bigskip

A smooth projective variety $X$ over a field $k$ is called
\linedef{rationally chain connected} if for every algebraically closed
field extension $K/k$, any two $K$-points of $X$ can be connected by a
chain of rational curves.  Smooth, geometrically unirational varieties
are rationally connected.  It is a theorem of
Campana~\cite{campana:Fano} and
Koll\'ar--Miyaoka--Mori~\cite{kollar_miyaoka_mori:Fano} that any
smooth projective Fano variety is rationally chain connected.  If $X$
is rationally chain connected, then $A_0(X_K)=0$ for any algebraically
closed field extension $K/k$.  While a standard argument then proves
that $A_0(X_F)$ is torsion for every field extension $F/k$, the
following more precise result, in the spirit of Lemma
\ref{infinitetranscendance} above, is known.

\begin{prop}[{\cite[Prop.~11]{colliot:finitude}}]
\label{prop:rat_conn_N}
Let $X$ be a smooth, projective, rationally chain connected variety
over a field $k$.  Then there exists an integer $N>0$ such that $A_0(X)$
is universally $N$-torsion.
\end{prop}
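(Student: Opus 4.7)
The plan is to adapt the diagonal decomposition argument of Lemma~\ref{lem:univ_triv} (and its variant in Lemma~\ref{lem:absolute_gives_N}), replacing a literal rational equivalence by one that holds only after multiplication by some integer $N$. The rational chain connectedness of $X$ provides the essential input: for every algebraically closed extension $\Omega/k$ we have $A_0(X_\Omega)=0$, since any two $\Omega$-points of $X_\Omega$ are joined by a chain of rational curves.

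First, I would work over $\ol{k}$. Choose a $\ol{k}$-point $\ol{P}$ of $X_{\ol{k}}$, and let $\xi$ denote the $\ol{k}(X)$-point of $X_{\ol{k}(X)}$ coming from the diagonal morphism. Over an algebraic closure $\Omega$ of $\ol{k}(X)$, one has $\xi_\Omega = \ol{P}_\Omega$ in $A_0(X_\Omega)$, and spreading out shows this equality already holds over some finite extension $L$ of $\ol{k}(X)$, of degree $N_0$. Corestriction from $L$ to $\ol{k}(X)$ then gives $N_0(\xi - \ol{P}_{\ol{k}(X)})=0$ in $A_0(X_{\ol{k}(X)})$. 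Proceeding as in the proof of Lemma~\ref{lem:univ_triv}, taking closures in $X_{\ol{k}} \times_{\ol{k}} X_{\ol{k}}$ of the two 0-cycles yields a decomposition
$$
N_0 \Delta_{X_{\ol{k}}} = N_0 (\ol{P} \times X_{\ol{k}}) + Z \quad \text{in } \CH^d(X_{\ol{k}} \times_{\ol{k}} X_{\ol{k}}),
$$
where $d = \dim X$ and $Z$ is supported on $X_{\ol{k}} \times \ol{V}$ for some proper closed subvariety $\ol{V} \subsetneq X_{\ol{k}}$.

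Next, I would descend to $k$: the above equality is already defined over some finite extension $k'/k$, and pushforward along the flat finite morphism $(X \times_k X)_{k'} \to X \times_k X$ of degree $[k':k]$ turns $\Delta_{X_{k'}}$ into $[k':k]\Delta_X$, yielding
$$
N \Delta_X = \alpha + \beta \quad \text{in } \CH^d(X \times_k X)
$$
with $N = [k':k]N_0$, where $\alpha$ is supported on $D \times_k X$ for a 0-cycle $D$ on $X$ (the pushforward of $\ol{P}$) and $\beta$ is supported on $X \times_k V$ for a proper closed subvariety $V \subsetneq X$. For any field extension $F/k$, base changing and letting both sides act on $A_0(X_F)$ via correspondences as in the proof of Lemma~\ref{lem:univ_triv}: $\alpha_F$ sends a 0-cycle $z$ to $\deg(z)$ times a multiple of $D_F$, hence kills $A_0(X_F)$; and $\beta_F$ annihilates every class of $A_0(X_F)$ once it is represented by a cycle with support disjoint from $V$, possible by the easy moving lemma for 0-cycles on a smooth variety. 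Thus multiplication by $N$ kills $A_0(X_F)$.

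The main obstacle is the bookkeeping across the two separate spreading-out steps---passing from an algebraic closure of $\ol{k}(X)$ to a finite extension of $\ol{k}(X)$, and then descending from $\ol{k}$ to a finite extension of $k$---combined with the careful verification that the pushforward of the diagonal of $X_{k'}$ along the base change map contributes exactly a factor of $[k':k]$. Once these points are settled, the correspondence action and moving-lemma argument are entirely standard and identical to what appears in Lemma~\ref{lem:univ_triv}.
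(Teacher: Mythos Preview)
The paper does not give its own proof of this proposition; it simply cites \cite[Prop.~11]{colliot:finitude}. Your argument is correct and is essentially the proof given in that reference: it is the natural amalgam of the paper's Lemma~\ref{lem:univ_triv} (diagonal decomposition and correspondence action) with the corestriction trick of Lemma~\ref{lem:absolute_gives_N}, together with a further descent from $\ol{k}$ to $k$. Nothing more needs to be said.
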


There exist rationally connected varieties $X$ over an algebraically
closed field of characteristic zero with $A_0(X)$ not universally
trivial.  Indeed, let $X$ be a unirational threefold with
$\Hur^2(X,\Q/\Z(1)) \isom \Br(X) \neq 0$, see e.g.,
\cite{artin_mumford}.  Then by 
Theorem~\ref{thm:Merk_N}, $A_0(X)$ is not universally trivial. 

We do not know whether there exist smooth Fano varieties over an
algebraically closed field of characteristic zero with $A_0(X)$ not
universally trivial.

\section{Chow groups of 0-cycles on cubic hypersurfaces}
\label{sec:cubichyper}

Now we will discuss the situation for   cubic hypersurfaces $X
\subset \P^{n+1}$ with $n \geq 2$. 
Quite a few years ago, one of us learned the following argument from
the Dean of Trinity College, Cambridge.

\begin{prop}
\label{prop:6}
Let $X \subset \P^{n+1}_{k}$, with $n \geq 2$, be an arbitrary  cubic hypersurface
over an arbitrary     field $k$.
 Then $A_0(X)$ is  
18-torsion.  If $X(k) \neq \emptyset $ then $A_0(X)$ is  
6-torsion.  If $X$ contains a $k$-line, then $A_0(X)$ is  
2-torsion.
\end{prop}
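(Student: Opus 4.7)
By Lemma~\ref{lem:finite_sufficient}, each of the three claims reduces to showing that, for every finite extension $K/k$ and every pair $P, Q \in X(K)$, $N([P]-[Q]) = 0$ in $A_0(X_K)$, for $N \in \{2,6,18\}$. Throughout, let $h \in \CH_0(X)$ denote the class of $X \cap \ell$ for any $k$-line $\ell \subset \P^{n+1}$ not contained in $X$: this is well-defined by the rational connectedness of the Grassmannian of lines, has degree $3$, and confirms $i(X) \mid 3$.

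\textbf{Line case ($2$-torsion).} Fix $O \in L(k)$. If $P \in L_K$, then $[P]-[O_K]=0$ in $\CH_0(L_K) \cong \CH_0(\P^1_K)$. Otherwise, form the plane $\Pi := \langle L_K, P \rangle \subset \P^{n+1}_K$. If $\Pi \subset X_K$, conclude via $\CH_0(\Pi) \cong \CH_0(\P^2_K)$. Otherwise $\Pi \cap X_K = L_K + C$, where $C \subset \Pi$ is a conic through $P$, hence $K$-rational (whether smooth, a union of two $K$-lines, or a double $K$-line, all handled similarly). Let $D := L_K \cap C$, an effective 0-cycle of degree $2$ by Bezout. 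On $C$, both $2[P]$ and $D$ are effective degree-$2$ cycles, hence linearly equivalent; on $L_K \cong \P^1_K$, both $D$ and $2[O_K]$ are effective degree-$2$ cycles, hence linearly equivalent. Combining, $2[P] \sim 2[O_K]$ in $\CH_0(X_K)$, so $2([P]-[Q]) = 0$ for all $P, Q$.

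\textbf{Point case ($6$-torsion).} Set $\alpha_Q := [Q] - [O_K]$ and $\alpha := h - 3[O_K] \in A_0(X)$. The secant $\ell_{OP}$ produces a residual $R \in X(K)$ with $\alpha_P + \alpha_R = \alpha$, and a generic tangent line at $P$ gives a residual $T$ with $2\alpha_P + \alpha_T = \alpha$. The plan is to combine these with auxiliary relations coming from plane cubics $\Pi \cap X$ through $O$ and $P$ (or from $K$-lines contained in $X$ over controlled extensions) to force $6\alpha_P = 0$. This step is the main obstacle: neither the secant involution ($\alpha_P \mapsto \alpha - \alpha_P$) nor the tangent map ($\alpha_P \mapsto \alpha - 2\alpha_P$) produces torsion on its own, and extracting the bound $6 = 2 \cdot 3$ seems to require pairing a factor of $3$ coming from the degree of $X$ (carried by $h$) with a factor of $2$ coming from the tangent-residual construction.

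\textbf{General case ($18$-torsion).} Intersecting $X$ with any $k$-line $\ell \subset \P^{n+1}$ yields a 0-cycle of degree $3$ on $X$ defined over $k$. Letting $K/k$ be the residue field of any closed point in its support, we have $[K:k] \leq 3$ and $X_K(K) \neq \emptyset$, so the point case gives $A_0(X_K)$ is $6$-torsion. Since $\Norm_{K/k} \circ \res$ is multiplication by $[K:k] \leq 3$ on $A_0(X)$, we obtain $3 A_0(X) \subseteq \Norm_{K/k}(A_0(X_K))$, which is $6$-torsion, whence $18 A_0(X) = 0$.
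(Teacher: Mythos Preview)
Your line case and your reduction of the $18$-torsion claim to the $6$-torsion claim are essentially the paper's arguments. (One small slip in the last paragraph: from $\Norm_{K/k}\circ\res = [K:k]$ you only get $[K:k]\cdot A_0(X)\subseteq \im(\Norm_{K/k})$, not $3A_0(X)$; but since a line section of a cubic with no $k$-point must be a single point of degree $3$, you may always take $[K:k]\in\{1,3\}$, and then the conclusion is fine.)

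The genuine gap is the $6$-torsion case: you set up secant and tangent-line relations $\alpha_P+\alpha_R=\alpha$ and $2\alpha_P+\alpha_T=\alpha$, but as you yourself note, these do not by themselves force any torsion on $\alpha_P$. The missing geometric idea is to cut down to a cubic \emph{surface} and use tangent \emph{planes} rather than tangent lines. Choose a $\P^3_K\subset\P^{n+1}_K$ through $P$ and $Q$; then $X_K\cap\P^3_K$ is either all of $\P^3_K$ (trivial case) or a cubic surface $Y\subset\P^3_K$, and it suffices to prove $6(P-Q)=0$ in $A_0(Y)$. On a cubic surface one can argue as follows (assuming $Y$ contains no $K$-line and $P,Q$ are smooth points; the singular and line-containing cases are handled separately). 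The tangent planes $T_P$ and $T_Q$ are distinct; their intersection is a line $\ell$ not contained in $Y$, cutting out a $0$-cycle $z$ of degree $3$ on $Y$. The plane section $Y\cap T_P$ is a cubic curve $C_P$ in $T_P\cong\P^2_K$ that is singular at $P$ and contains $z$. On such a curve one checks $6P-2z=0$ in $A_0(C_P)$ (the factor $2$ comes from the double point at $P$, the factor $3$ from $\deg z=3$; the coefficient $6$ rather than $2$ accounts for the possibility of a cuspidal $C_P$ over a nonperfect field of characteristic $3$). Symmetrically $6Q-2z=0$ on $C_Q$. Pushing both relations to $Y$ and subtracting gives $6(P-Q)=0$.

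The point is that your tangent-\emph{line} construction produces a residual point $T$ but no useful curve on which to compare $P$ and $T$; the tangent-\emph{plane} construction produces a singular plane cubic on which the relation $6P\sim 2z$ is forced by degree considerations, and the shared line $\ell=T_P\cap T_Q$ lets you compare $P$ and $Q$ through the common cycle $z$.
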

\begin{proof}
Assuming the assertion in the case $X(k) \neq \emptyset$, we can
deduce the general case from a norm argument, noting that $X$ acquires
a rational point after an extension of degree 3.  So we assume that
$X(k) \neq \emptyset$.  By Lemma~\ref{lem:finite_sufficient}, to prove
the general statement over an arbitrary field $k$, it suffices to
prove that for any distinct points $P,Q \in X(k)$, the class of the
0-cycle $P-Q$ is 6-torsion, and that it is 2-torsion if $X$ contains a
$k$-line.  Choose a $\P^3_{k} \subset \P^{n+1}_{k}$ containing $P$ and
$Q$.  If $X$ contains a $k$-line $L$, take such a $\P^3_{k}$ which
contains the line $L$.  Then $X \cap \P^3_{k}$ is either $\P^3_{k}$ or
is a cubic surface in $\P^3_{k}$.  In the first case, $P$ is
rationally equivalent to $Q$ on $\P^3_{k}$, hence on $X$.  Thus we can
assume, for the rest of the proof, that $X \subset \P^3_{k}$ is a
cubic surface, possibly singular.  We will sketch a route to prove the
result, leaving the minute details to the reader.

We first show that if the cubic surface $X$ contains a $k$-line $L$,
then for any two distinct points $P,Q \in X(k)$, we have $2(P-Q)=0 \in
A_{0}(X)$. It is enough to prove this when $Q$ is a $k$-point on $L$.
If $P$ also lies on $L$, then $P-Q=0$.  If $P$ does not lie on $L$,
let $\Pi$ be the plane spanned by $P$ and $L$.  If it is contained in
$X$, then clearly $P-Q=0$. If not, then it cuts out on $X$ a cubic
curve $C$, one component of which is the line $L$, the other component
is a conic. At this juncture, we leave it to the reader to consider
the various possible cases and show that $2(P-Q)$ is rationally
equivalent to zero on $C$, hence on $X$.  The coefficient $2$ is the
degree of intersection of the conic with the line.

We may now assume that the cubic surface $X$ does not contain a
$k$-line.  If $P$ and $Q$ were both singular, then the line through
$P$ and $Q$ would intersect the surface with multiplicity at least 4,
hence would be contained in $X$, which is excluded.  We may thus
assume that $P$ is a regular $k$-point.  First assume that $Q$ is
singular.  Let $\Pi$ be a plane which contains $P$ and $Q$. It is not
contained in $X$.  Its trace on $X$ is a cubic curve $C$ in the plane
$\Pi$, such that $C$ is singular at $Q$.  A discussion of cases then
shows that $6(Q-P)=0$ on $C$, hence on $X$ (the occurence of $6$
rather than $2$, comes from allowing nonperfect fields in
characteristic 3).  Now suppose that both $P$ and $Q$ are regular,
hence smooth, $k$-points.  Let $T_{P} \subset \P^3_{k}$ be the tangent
plane to $X$ at $P$, and $T_{Q} \subset \P^3_{k}$ the tangent plane to
$X$ at $Q$.  Since $X$ contains no $k$-line, the tangent planes are
distinct. Let $L=T_{P}\cap T_{Q}$ be their intersection. This line is
not contained in $X$, which it intersects in a zero-cycle $z$ of
degree 3 over $k$.  The trace of $X$ on the plane $T_{P}$ is a cubic
curve $C_{P}$ which is singular at $P$ and contains the 0-cycle $z$.
We leave it to the reader to check that $6P-2z=0 \in A_{0}(C_{P})$,
hence in $A_{0}(X)$.  Similarly, $6Q-2z=0 \in A_{0}(C_{Q})$, hence in
$A_{0}(X)$. Thus $6(P-Q)=0 \in A_{0}(X)$ in all cases.
\end{proof}

For smooth cubic hypersurfaces, the last part of
Proposition~\ref{prop:6} is a consequence of the fact that a cubic
hypersurface containing a line has unirational parameterizations of
degree 2.  This fact that was likely known to M.\ Noether (cf.\
\cite[App.~B]{clemens_griffiths}).

\begin{theorem}
\label{thm:unirat}
Let $X \subset \P^{n+1}_{k}$, with $n \geq 2$, be a smooth cubic
hypersurface containing a $k$-line $L$.  Then $X$ has a unirational
parameterization of degree 2.
\end{theorem}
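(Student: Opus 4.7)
The plan is to construct a dominant rational map of degree $2$ from a $k$-rational $n$-dimensional variety onto $X$, by parameterizing $X$ via the family of tangent lines to $X$ at points of $L$. Consider the incidence variety
$$
I = \bigl\{(p,\ell) : p \in L,\ \ell \subset \P^{n+1}_{k} \text{ a line through } p \text{ with } \ell \subset T_{p}X\bigr\},
$$
where $T_{p}X \subset \P^{n+1}_{k}$ denotes the projective tangent hyperplane to $X$ at $p$. The fiber of the projection $I \to L$ over $p$ is the $\P^{n-1}_{k}$ of lines through $p$ in $T_{p}X$; intrinsically, $I$ is the projectivization of the restriction to $L$ of the tangent bundle of $X$. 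By the Grothendieck--Birkhoff splitting theorem, this rank-$n$ vector bundle on $L \cong \P^{1}_{k}$ is a direct sum of line bundles, so $I \to L$ is Zariski-locally trivial, and $I$ is $k$-birational to $\P^{n}_{k}$.

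I then define a rational map $\phi : I \dashrightarrow X$. Tangency of $\ell$ to $X$ at $p$ forces the scheme-theoretic intersection $\ell \cap X$ to have length at least $2$ at $p$; whenever $\ell \not\subset X$, this intersection is a $0$-dimensional subscheme of length $3$, whose residual part beyond $2p$ is a single point $q \in X$. Set $\phi(p,\ell) = q$. The map is defined on the complement of the proper closed locus $\{\ell \subset X\} \subset I$, which contains the canonical section $p \mapsto (p, L)$ coming from the $k$-line $L \subset X$.

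To compute the degree of $\phi$, write $X$ as the vanishing locus of a cubic form $F$, so that $T_{p}X$ is cut out by $\sum_{i}\partial_{i}F(p)\,X_{i} = 0$. Given a general point $q \in X$, any preimage $(p,\ell)$ of $q$ under $\phi$ satisfies $q \in \ell$ and $\ell \subset T_{p}X$; the first condition forces $\ell = \overline{pq}$, while the second becomes
$$
\sum_{i}\partial_{i}F(p)\,q_{i} = 0.
$$
Each $\partial_{i}F$ is homogeneous of degree $2$, so this is a homogeneous polynomial of degree at most $2$ in $p \in L \cong \P^{1}_{k}$. Provided it has degree exactly $2$ for generic $q$, the fiber $\phi^{-1}(q)$ consists of precisely two points, so $\phi$ is dominant of degree $2$; composing with a $k$-birational equivalence $\P^{n}_{k} \dashrightarrow I$ then yields the desired degree-$2$ unirational parameterization of $X$.

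The main subtle point is this nondegeneracy. If $p \mapsto \sum_{i}\partial_{i}F(p)\,q_{i}$ failed to have degree exactly $2$ for generic $q \in X$, then the Gauss map $L \to (\P^{n+1}_{k})^{*}$, $p \mapsto [T_{p}X]$, would be constant. In that case we could write $\partial F(p) = \lambda(p)\,c$ for a fixed nonzero vector $c \in k^{n+2}$ and $\lambda$ a homogeneous polynomial of degree $2$ on $L$. Over $\bar{k}$ the form $\lambda$ would acquire a zero $p_{0} \in L(\bar{k})$ at which $\partial F(p_{0}) = 0$, forcing $X$ to be singular at $p_{0}$ and contradicting the smoothness hypothesis. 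Hence $\phi$ is dominant of degree exactly $2$, completing the construction.
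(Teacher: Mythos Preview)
Your construction coincides with the paper's first argument: your $I$ is their $W$ and your $\phi$ is their $g$. The only substantive difference is in computing the degree. The paper does it geometrically: for a general $q \in X$, the plane spanned by $q$ and $L$ meets $X$ in $L$ together with a smooth conic, and that conic meets $L$ in two points $p_1,p_2$; the lines $\overline{p_j q}$ are then precisely the two tangent lines to $X$ at points of $L$ through $q$. Your algebraic computation via the degree-$2$ form $\sum_i \partial_i F(p)\,q_i$ on $L$ reaches the same conclusion, and your nondegeneracy argument through the Gauss map is correct, though it can be shortened: if that form vanished identically in $p$ for all $q \in X$, then $X \subset T_{p_0}X$ for any fixed $p_0 \in L$, impossible for a hypersurface of degree $>1$. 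The paper also records a second proof via the conic-bundle structure obtained by projecting $X$ from $L$, which you do not use.
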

\begin{proof}
Denote by $W$ the variety of pairs $(p,l)$ where $p \in L$ and $l$ is
a line in $\P^{n+1}_{k}$ tangent to $X$ at $p$.  Then the projection $W
\to L$ is a Zariski locally trivial $\P^{n-1}$-bundle.  A general such
line $l$ intersects $X$ in one further point, defining a rational map
$g : W \to X$.  This map is two-to-one.  Indeed, as before, for a
general point $p \in X$, the plane through $p$ and $L$ meets $X$ in
the union of $L$ and a smooth conic. Generally, that conic meets $L$
in two points.  The lines through $p$, and tangent to $X$, are exactly
those connecting $p$ to these two points of intersection.

We can give another argument, following
\cite[\S2.1]{harris_mazur_pandharipande}.  Projecting from $L$
displays $X$ as birational to the total space of a conic bundle $Y \to
\P^{n-1}_{k}$, where $Y$ is the blow-up of $X$ in $L$.  Each point $P$ in
the base $\P^{n-1}_{k}$ corresponds to a plane containing $L$, and
intersecting $X$ with this plane is the union of $L$ and a conic
$C_P$; this conic is the fiber above $P$ in the conic bundle.  Let $M$
be the incidence variety of pairs $(P,p)$ where $P \in \P^{n-1}_{k}$ and
$p \in L$, such that $p \in C_P$.  Then the projection $M \to
\P^{n-1}_{k}$ has degree 2, the fiber over each $P$ is exactly the two
points of intersection of $L$ and $C_P$.  The other projection $M \to
L$ displays $M$ as the total space of a Zariski locally trivial
$\P^{n-2}$-bundle, hence $M$ is rational.  Then consider the base
change of $Y \to \P^{n-1}_{k}$ by $M \to \P^{n-1}_{k}$.  The resulting conic
bundle $M \times_{\P^{n-1}_{k}} Y \to M$ has a tautological rational
section.  Thus $M \times_{\P^{n-1}_{k}} Y$ is rational (being a conic
bundle over $M$ with a rational section) and the projection to $Y$ has
degree 2.
\end{proof}

Over an algebraically closed field, any cubic hypersurface $X \subset
\P^{n+1}$, with $n \geq 2$, contains a line.  Indeed, by taking
hyperplane sections, one is reduced to the well knwon fact that any
cubic surface $X \subset \P^3$ contains a line over an algebraically
closed field, see
\cite[Prop.~7.2]{reid:undergraduate_algebraic_geometry} for instance.
Proposition \ref{prop:6} and Theorem \ref{thm:Merk_N} then yield the
following corollary.

\begin{cor}
\label{cor:univ2}
Let $X \subset \P^{n+1}_{k}$, with $n \geq 2$, be a smooth cubic
hypersurface over a field $k$.  If $X$ contains a $k$-line (e.g., if
$k$ is an algebraically closed field) then $A_0(X)$ is universally
2-torsion and thus for every cycle module $M$ over $k$, the group
$M_\ur(X/k)$ is universally 2-torsion.
\end{cor}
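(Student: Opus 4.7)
The plan is to extract Corollary~\ref{cor:univ2} as an immediate consequence of Proposition~\ref{prop:6} combined with Theorem~\ref{thm:Merk_N}, with the only subtlety being that we must verify the hypotheses of both results persist after arbitrary base change.

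First I would observe that the property of containing a $k$-line is preserved under any base change $F/k$: the smooth cubic hypersurface $X_F \subset \P^{n+1}_F$ contains the $F$-line $L_F$. Proposition~\ref{prop:6}, applied to $X_F$ over $F$, then yields that $A_0(X_F)$ is $2$-torsion. Since this holds for every field extension $F/k$, by definition $A_0(X)$ is universally $2$-torsion.

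Next I would compute the index. Because $X$ contains a $k$-line, it certainly has a $k$-rational point, so $i(X) = 1$. Theorem~\ref{thm:Merk_N} with $N = 2$ then says that for every cycle module $M$ over $k$ the group $M_{\ur}(X/k)$ is universally $i(X) \cdot N = 2$-torsion, giving the second assertion.

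For the parenthetical about algebraically closed fields: when $k = \ol{k}$, the existence of a $k$-line on any smooth cubic hypersurface $X \subset \P^{n+1}$ with $n \geq 2$ reduces, by intersecting with $n-2$ general hyperplanes, to the classical fact that a smooth cubic surface in $\P^3$ over an algebraically closed field contains a line (in fact, $27$ of them), as cited in \cite[Prop.~7.2]{reid:undergraduate_algebraic_geometry}. The argument has no serious obstacle; the only point worth emphasizing is that the factor $i(X)$ in Theorem~\ref{thm:Merk_N} would have doubled our torsion bound were it not for the presence of the $k$-line providing $k$-rational points and hence $i(X) = 1$.
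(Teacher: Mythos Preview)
Your proposal is correct and follows exactly the route the paper intends: the paper's entire justification is the single sentence ``Proposition~\ref{prop:6} and Theorem~\ref{thm:Merk_N} then yield the following corollary,'' preceded by the hyperplane-section reduction for the algebraically closed case. You have simply made explicit the two points the paper leaves to the reader, namely that the $k$-line persists after any base change $F/k$ (so Proposition~\ref{prop:6} applies over every $F$) and that the $k$-line forces $i(X)=1$ (so the factor $i(X)$ in Theorem~\ref{thm:Merk_N} does not inflate the bound).
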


Hassett and
Tschinkel~\cite[\S7.5]{hassett_tschinkel:rational_curves_homolomorphic_symplectic}
prove that the cubic fourfolds over $\C$ with a unirational
parameterization of odd degree are Zariski dense in the moduli space.
In particular, they prove that on a Zariski dense subset of the
Noether--Lefschetz divisor $\mathcal{C}_{d}$, with $d=2(m^2+m+1)$ for
$m \geq 1$, the cubic fourfolds have a unirational parameterization of
degree $m^2+m+1$.  The countable union of these loci is dense in the
moduli space.  Together with Corollary~\ref{cor:univ2}, this implies
that such cubic fourfolds $X$ have universally trivial $A_0(X)$, and
thus for every cycle module $M$ over $\C$, the group $M_\ur(X/k)$ is
universally trivial.

Corollary~\ref{cor:univ2} leaves the following questions open. Let $X
\subset \P^{n+1}$, with $n \geq 2$, be a smooth cubic hypersurface
over $\C$.
\begin{enumerate}
\item Is the group $A_{0}(X)$ universally trivial?

\item For any integer $i \geq 1$, are the unramified cohomology groups
$\Hur^{i}(X/\C, \mu_2^{\tensor (i-1)})$ universally trivial?
\end{enumerate}

\medskip

The following theorem gathers previously known results.

\begin{theorem}
\label{thm:easy}
Let $X \subset \P^{n+1}_{k}$ be a smooth cubic hypersurface over a field
$k$ of characteristic zero.  Then $\Hur^i(X/k,\Q/\Z(1))$ is
universally trivial for all $n \geq 3$ and $0 \leq i \leq 2$.  If
$k=\C$, then $\Hur^3(X/k,\Q/\Z(2))$ is trivial for all $3 \leq n \leq
4$.
\end{theorem}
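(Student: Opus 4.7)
The plan is to handle each degree $i$ in turn, with the hard work concentrated in $i=3$.

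\textbf{Degrees $i=0,1$.} For $i=0$ geometric integrality of $X_F$ over $F$ gives $\Hur^0(X_F/F,\Q/\Z(1))=\mu_{\infty}(F)=H^0(F,\Q/\Z(1))$. For $i=1$, apply the Kummer sequence to $X_F$: for every $m$ coprime to $\mathrm{char}(k)$,
\[
0\to F^\times/F^{\times m}\to H^1_\et(X_F,\mu_m)\to \Pic(X_F)[m]\to 0,
\]
combined with purity ($\Hur^1=H^1_\et$ for smooth proper $X_F$). By the Lefschetz hyperplane theorem in the M.~Noether form, applied to the smooth cubic $X_F\subset\P^{n+1}_F$ with $n\geq 3$, $\Pic(\bar X_F)=\Z$ is generated by the hyperplane class, which is already $F$-rational; in particular $\Pic(X_F)$ is torsion-free. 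Passing to the colimit in $m$ gives the isomorphism $H^1(F,\Q/\Z(1))\isomto\Hur^1(X_F/F,\Q/\Z(1))$.

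\textbf{Degree $i=2$.} By Grothendieck's purity for the Brauer group over a smooth proper base in characteristic zero, $\Hur^2(X_F/F,\Q/\Z(1))\isom\Br(X_F)$. Analyze the Leray/Hochschild--Serre spectral sequence for $X_F\to\Spec F$ with coefficients $\Gm$. Hilbert 90, together with the fact that $\Pic(\bar X_F)=\Z$ with trivial Galois action and that the generator lifts to $\Pic(X_F)$, yields
\[
\Br(F)\twoheadrightarrow\ker\bigl(\Br(X_F)\to\Br(\bar X_F)^{G_F}\bigr),
\]
since $H^1(G_F,\Pic\bar X_F)=H^1(G_F,\Z)=0$. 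It remains to show $\Br(\bar X_F)=0$. For a smooth projective variety over an algebraically closed field of characteristic zero with $h^{2,0}=0$ and torsion-free integral Betti cohomology, the cohomological Brauer group vanishes. A smooth cubic hypersurface of dimension $n\geq 3$ has $h^{2,0}(X)=0$ and torsion-free $H^*_{\mathrm{Betti}}(X,\Z)$ (Lefschetz plus the standard description of the primitive middle cohomology). By Lefschetz for Brauer groups / rigidity one reduces to the complex case, so $\Br(\bar X_F)=0$ and we get $\Br(F)\isomto \Br(X_F)$ universally.

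\textbf{Degree $i=3$.} Here I would invoke the Colliot-Th\'el\`ene--Voisin reformulation: for a smooth projective $X/\C$ with $H^3(X,\OO_X)=0$ (which holds for smooth cubic hypersurfaces of dimension $3$ and $4$, as $h^{3,0}=0$), there is an isomorphism
\[
\Hur^3(X/\C,\Q/\Z(2))\isom Z^4(X),
\]
where $Z^4(X)$ is the cokernel of the integral cycle class map $\CH^2(X)\to \mathrm{Hdg}^4(X,\Z(2))$, i.e.\ the obstruction to the integral Hodge conjecture in codimension $2$. For $n=3$, Murre proved that $\CH^2$ of a smooth cubic threefold is generated by lines, giving $Z^4(X)=0$; for $n=4$, Voisin (building on Zucker and Murre) proved the integral Hodge conjecture in codimension $2$ directly, again giving $Z^4(X)=0$. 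In each case $\Hur^3(X/\C,\Q/\Z(2))=0$.

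The main obstacle is the $i=3$ case, which rests entirely on the Colliot-Th\'el\`ene--Voisin identification together with Voisin's integral Hodge conjecture theorem for cubic fourfolds; the cases $i\leq 2$ are essentially formal consequences of the Kummer/Leray machinery once one knows that $\Pic(\bar X_F)=\Z$ is torsion-free with trivial Galois action.
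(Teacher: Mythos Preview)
Your argument is correct and lands on the same endgame as the paper: Kummer/purity for $i\leq 1$, Leray with $\Pic(\bar X_F)=\Z$ for $i=2$, and the Colliot-Th\'el\`ene--Voisin identification $\Hur^3(X/\C,\Q/\Z(2))\isom Z^4(X)$ together with the integral Hodge conjecture in codimension~2 for $i=3$. Two small points of divergence are worth noting. First, for $i=2$ the paper simply quotes the appendix to Poonen--Voloch, which gives directly that $\Br(F)\to\Br(Y)$ is an isomorphism for any smooth complete intersection $Y\subset\P^{n+1}_F$ of dimension $\geq 3$; your route via the Hochschild--Serre sequence and the vanishing of $\Br(\bar X_F)$ is a legitimate unpacking of that same fact, but the black-box citation avoids the reduction to $\C$ and the Hodge-theoretic computation of the geometric Brauer group. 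Second, for $i=3$ and $n=3$ the paper records an alternative, more elementary argument: a cubic threefold over $\C$ contains a line, hence is birational to a conic bundle over $\P^2$, and for such conic bundles $\Hur^3$ vanishes by \cite{colliot:quadrics}; this bypasses the Colliot-Th\'el\`ene--Voisin machinery entirely in that case.

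One genuine imprecision to fix: the hypothesis you state for the Colliot-Th\'el\`ene--Voisin theorem is not $H^3(X,\OO_X)=0$ but rather rational chain connectedness (or a decomposition-of-the-diagonal condition on $\CH_0$). The paper invokes it in that form, after observing that smooth cubic hypersurfaces are unirational hence rationally connected. Your condition $h^{3,0}=0$ is a consequence, not the input. Also, for $n=3$ the vanishing of $Z^4(X)$ is immediate from Lefschetz ($H^4(X,\Z)\cong\Z$, generated by the class of a line), not from Murre; the references to Murre and Zucker in the paper pertain to the fourfold case, as background to Voisin's theorem.
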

\begin{proof}
Let $F/k$ be any field extension.  For any complete intersection $Y
\subset \P^{n+1}_{F}$ of dimension $\geq 3$ over $F$, the restriction
map on Picard groups $\Pic(\P^{n+1}_{F}) \to \Pic(Y)$ is an
isomorphism and the natural map on Brauer groups $\Br(F) \to \Br(Y)$
is an isomorphism, see \cite[Thm.~A.1]{poonen_voloch}.  By purity, for
any smooth variety $Y$ over $F$, we have that $\Hur^1(Y/F,\Q/\Z(1)) =
\Het^1(Y,\Q/\Z(1))$ and that $\Hur^2(Y/F,\Q/\Z(1)) = \Br(Y)$, see
\cite[Cor.~3.2,~Prop.~4.1]{colliot_sansuc:multiplicatif} and
\cite{bloch_ogus}.  From the Kummer sequence for a projective and
geometrically connected variety $X$ over $F$, we get an the exact
sequence
$$ 
1 \to F\mult/F^{\times n} \to \Het^1(X,\mu_n) \to \Pic(X)[n] \to 0
$$  
which yields an exact sequence
$$
0 \to H^1(F,\Q/\Z(1)) \to \Het^1(X,\Q/\Z(1)) \to
\Pic(X)_{\mathrm{tors}} \to 0.
$$
upon taking direct limits.

When $X \subset \P^{n+1}_{k}$ is a smooth cubic hypersurface, the above
considerations imply that $\Hur^i(X/k,\Q/\Z(1))$ is universally
trivial for $i=1,2$.  Also $\Hur^0(X/k,\Q/\Z(1))$ is universally
trivial since $X$ is geometrically irreducible.

Now assume $k=\C$.  If $n=3$, then $X$ contains a line hence is
birational to a conic fibration over $\P^2$, as in the proof of
Theorem~\ref{thm:unirat}.  For a conic bundle $Y$ over a complex
surface, one has $\Hur^3(Y/\C,\Q/\Z(2))=0$ (cf.\
\cite[Cor.~3.1(a)]{colliot:quadrics}).  If $n=4$ and $X$ contains a
plane, then $X$ is birational to a fibration $Y \to \P^2$ in
2-dimensional quadrics, and once again
\cite[Cor.~3.1(a)]{colliot:quadrics} yields $\Hur^3(X/\C,\Q/\Z(2))=0$.

For $n \geq 2$ arbitrary, $X$ is unirational hence rationally chain
connected.  Then \cite[Thm.~1.1]{colliot_voisin:unramified_cohomology}
implies that the integral Hodge conjecture for codimension 2 cycles on
$X$ is equivalent to the vanishing of $\Hur^3(X/\C,\Q/\Z(2))$.

For smooth cubic threefolds, the integral Hodge conjecture holds for
codimension 2 cycles, as $H^4(X,\Z)$ is generated by a line. This
yields another proof of $\Hur^3(X/\C,\Q/\Z(2))=0$ in the case $n=3$.

For smooth cubic fourfolds, the integral Hodge conjecture for
codimension 2 cycles is a result of
Voisin~\cite[Thm.~18]{voisin:aspects}, building on \cite{murre} and
\cite{zucker}. We thus get $\Hur^3(X/\C,\Q/\Z(2))=0$ for an arbitrary
smooth cubic hypersurface $X \subset \P^5$.
\end{proof}

Our main result, Theorem \ref{thm:main}, is a first step beyond the
mentioned results.


\section{Unramified cohomology of quadrics}
\label{sec:Unramified_cohomology_of_quadrics}

Let $Q$ be a smooth quadric over a field $k$ of characteristic $\neq
2$ defined by the vanishing of a quadratic form $q$.  We note that the
dimension of $Q$ (as a $k$-variety) is 2 less than the dimension of
$q$ (as a quadratic form).  When $Q$ has even dimension, one defines
the \linedef{discriminant} $d(Q) \in H^1(k,\mu_2)$ of $Q$ to be the
(signed) discriminant of $q$.  If $Q$ has even dimension and trivial
discriminant or has odd dimension, then define the \linedef{Clifford
invariant} $c(Q) \in \Br(k)$ of $Q$ to be the Clifford invariant of
$q$, i.e., the Brauer class of the Clifford algebra $C(q)$ or the even
Clifford algebra $C_0(q)$, respectively.  We point out when $q$ has
even rank and trivial discriminant, then a choice of splitting of the
center induces a decomposition $C_0(q) \isom C_0^+(q) \times
C_0^-(q)$, with $C(q)$, $C_0^+(q)$, and $C_0^-(q)$ all Brauer
equivalent central simple $k$-algebras.  Under the given constraints
on dimension and discriminant, these invariants only depend on the
similarity class of $q$, and thus yield well-defined invariants of
$Q$.

The following two results are well known
(cf. \cite[\S5,~p.~485]{arason}; see also the proof of
\cite[Th\'eor\`eme~2.5]{colliot_skorobogatov:quadriques}),
though we could not find the second stated in the literature.

\begin{theorem}
\label{thm:kernel}
Let $k$ be a field of characteristic $\neq 2$.  Let $Q$ be a smooth
quadric surface over $k$.  Then
$$
\ker\bigl( \Br(k) \to \Br(k(Q)) \bigr) = 
\begin{cases}
0 & \text{if $d(Q)$ is nontrivial} \\
\Z/2\Z \cdot c(Q) & \text{if $d(Q)$ is trivial}
\end{cases}
$$
 where $c(Q)$ is the Clifford invariant of $Q$.  
\end{theorem}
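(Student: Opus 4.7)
My approach is to use the Hochschild--Serre spectral sequence for the smooth projective surface $Q$ with $\Gm$-coefficients to identify $\ker(\Br(k) \to \Br(k(Q)))$ with the image of the $d_2$-differential $\Pic(\bar{Q})^{G_k} \to \Br(k)$, and then to compute this image in both cases by exploiting the two rulings of $\bar{Q} \cong \P^1 \times \P^1$.

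By purity for the Brauer group of the smooth variety $Q$, the natural map $\Br(Q) \hookrightarrow \Br(k(Q))$ is injective, so $\ker(\Br(k) \to \Br(k(Q))) = \ker(\Br(k) \to \Br(Q))$. The Leray spectral sequence $H^p(G_k, H^q(\bar{Q}, \Gm)) \Rightarrow H^{p+q}(Q, \Gm)$, together with $\Br(\bar{Q}) = 0$ (since $\bar{Q} \cong \P^1 \times \P^1$ is smooth projective rational over $\bar{k}$) and $H^1(G_k, \Pic(\bar{Q})) = 0$ (a direct computation: $\Pic(\bar{Q}) \cong \Z \oplus \Z$ carries either the trivial Galois action or the swap action, both of which yield vanishing $H^1$), gives the exact sequence
$$
\Pic(\bar{Q})^{G_k} \xrightarrow{\; d_2 \;} \Br(k) \longrightarrow \Br(Q) \longrightarrow 0.
$$
Hence $\ker(\Br(k) \to \Br(Q)) = \im(d_2)$.

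It remains to compute $\im(d_2)$ in the two cases. Write $\Pic(\bar{Q}) = \Z\ell_1 \oplus \Z\ell_2$ for the two rulings of $\bar{Q}$, with hyperplane class $h = \ell_1 + \ell_2$. When $d(Q) \neq 1$, the Galois action factors through $\mathrm{Gal}(k(\sqrt{d(Q)})/k)$ and swaps $\ell_1, \ell_2$, so $\Pic(\bar{Q})^{G_k} = \Z \cdot h$; since $h$ is $k$-rational (as the hyperplane class of $Q \subset \P^3$), one has $d_2(h) = 0$, whence $\im(d_2) = 0$. When $d(Q) = 1$, the Galois action is trivial and $\Pic(\bar{Q})^{G_k} = \Z\ell_1 \oplus \Z\ell_2$; here I would identify $d_2(\ell_i) \in \Br(k)$ with the Brauer class of the conic whose $k$-points classify the $k$-lines of $Q$ in ruling $\ell_i$, namely the corresponding component $A_\pm$ of the even Clifford algebra $C_0(q) \cong A_+ \times A_-$. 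Since $[A_+] = [A_-] = c(Q)$ for a $4$-dimensional form with trivial discriminant, one obtains $\im(d_2) = \Z/2 \cdot c(Q)$.

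The main technical obstacle is the identification $d_2(\ell_i) = [A_\pm] = c(Q)$ in the trivial-discriminant case: one must match the cohomological $d_2$-differential with the even Clifford algebra / Brauer--Severi construction. This reduces to showing that $Q$ admits a $k$-rational line in ruling $\ell_i$ if and only if $A_\pm$ splits over $k$, with obstruction precisely the Brauer class of $A_\pm$. The identification becomes routine once one recognizes each ruling of $Q$ as a conic bundle structure whose relative Brauer--Severi variety is a conic over $k$ classified by $[A_\pm]$.
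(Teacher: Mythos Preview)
Your approach is correct. The technical identification $d_2(\ell_i) = c(Q)$ that you flag is standard: the $d_2$-differential sends a Galois-invariant divisor class to the Brauer obstruction for it to descend to $\Pic(Q)$, and when $d(Q)$ is trivial each ruling $\ell_i$ exhibits $Q$ as a conic bundle over a conic $C_i$ whose class is that of the corresponding component of $C_0(q)$, hence equals $c(Q)$.

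Note, however, that the paper does not actually prove this theorem. It records the statement as well known, with references to Arason and to Colliot-Th\'el\`ene--Skorobogatov. The arguments in those sources are quadratic-form-theoretic: one works with the Witt kernel $W(k(Q)/k)$ and the behaviour of the Clifford invariant upon passage to the function field of a quadric. Your route---Leray/Hochschild--Serre plus the explicit Galois action on $\Pic(\bar Q)=\Z\ell_1\oplus\Z\ell_2$---is a genuinely different, geometric argument. It has the virtue of making the discriminant dichotomy completely transparent (the swap versus trivial action on the two rulings \emph{is} the discriminant), at the price of needing the $d_2$ identification, which in the quadratic-form approach falls out directly from the definition of $c(Q)$.

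One small remark: your exact sequence terminates in $\to \Br(Q)\to 0$. The surjectivity does hold here (it uses both $\Br(\bar Q)=0$ and $H^1(G_k,\Pic(\bar Q))=0$), but you do not need it; only exactness at $\Br(k)$ is relevant for computing the kernel.
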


\begin{prop}
\label{prop:kernel_cone}
Let $k$ be a field of characteristic $\neq 2$.  Let $Q$ be a quadric
surface cone over $k$, the base of which is a smooth conic $Q_0$.  Then
$$
\ker\bigl( \Br(k) \to \Br(k(Q)) \bigr) = 
\Z/2\Z \cdot c(Q_0)
$$
where $c(Q_0)$ is the Clifford invariant of $Q_0$.  
\end{prop}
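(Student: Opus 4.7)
The plan is to reduce the statement about the cone $Q$ to the analogous (classical) statement about the smooth conic $Q_0$ by exhibiting $k(Q)$ as a purely transcendental extension of $k(Q_0)$, and then invoking injectivity of the Brauer group under purely transcendental extensions.

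First I would identify the function field. Writing the cone $Q \subset \P^3_k$ as $\{q_0(x_1,x_2,x_3) = 0\}$ in coordinates $(x_1:x_2:x_3:x_4)$, where $q_0$ is the rank-3 form defining the conic $Q_0 \subset \P^2_k$, projection from the vertex $v=(0:0:0:1)$ gives a rational map $Q \dashrightarrow Q_0$ whose general fiber is the line through $v$ and the chosen point of $Q_0$. Parametrizing these lines explicitly (or blowing up $v$ to produce the tautological $\P^1$-bundle over $Q_0$, which has a section given by the base $Q_0 \subset Q$) shows that $Q$ is $k$-birational to $Q_0 \times_k \P^1_k$. In particular,
$$
k(Q) \isom k(Q_0)(t).
$$

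Next I would reduce the kernel computation to one over $k(Q_0)$. The map $\Br(F) \to \Br(F(t))$ is injective for any field $F$ of characteristic $\neq 2$ (via the residue maps at closed points of $\A^1_F$, or as a special case of Faddeev's exact sequence). Applying this with $F = k(Q_0)$, the injectivity of $\Br(k(Q_0)) \to \Br(k(Q_0)(t))$ gives
$$
\ker\bigl( \Br(k) \to \Br(k(Q)) \bigr) = \ker\bigl( \Br(k) \to \Br(k(Q_0)) \bigr).
$$

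Finally I would invoke the classical analogue of Theorem~\ref{thm:kernel} for smooth conics. A smooth conic $Q_0$ over $k$ is the Severi--Brauer variety of a uniquely determined quaternion algebra whose Brauer class is precisely the Clifford invariant $c(Q_0)$, and by Witt's theorem (or Amitsur's theorem on Severi--Brauer varieties) the kernel of $\Br(k) \to \Br(k(Q_0))$ is generated by $c(Q_0)$ and is of order at most 2; it is nontrivial precisely when $Q_0$ has no $k$-rational point, i.e.\ when $c(Q_0)\neq 0$. Combined with the previous identification, this yields
$$
\ker\bigl( \Br(k) \to \Br(k(Q)) \bigr) = \Z/2\Z \cdot c(Q_0),
$$
as required. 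I expect no serious obstacle: the only point needing some care is verifying cleanly that $Q$ is birational to $Q_0 \times_k \P^1_k$ over $k$ despite $Q$ being singular at the vertex, but since the vertex is a rational point it contributes a section of the projection from $v$, making the birationality transparent.
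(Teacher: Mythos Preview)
Your proposal is correct and follows essentially the same route as the paper: identify $k(Q)$ with $k(Q_0 \times_k \P^1_k)$, use injectivity of the Brauer group under purely transcendental extension to reduce to $\ker\bigl(\Br(k)\to\Br(k(Q_0))\bigr)$, and then invoke Witt's classical result for conics. One small quibble: in your closing remark it is not the vertex that furnishes the section of the projection, but rather the copy of $Q_0$ sitting inside $Q$ away from the vertex (or, after blowing up, the exceptional divisor); you had this right earlier in the argument.
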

\begin{proof}
In this case $k(Q) \isom k(Q_0 \times_{k} \P^1_{k})$, hence $\ker\bigl( \Br(k)
\to \Br(k(Q)) \bigr)$ equals $\ker\bigl( \Br(k) \to \Br(k(Q_0)) \bigr)$.
Thus the proposition follows from the case of smooth conics, a result
going back to Witt~\cite[Satz~p.~465]{witt:norm}.
\end{proof}

Finally, the deepest result we will need is the following one
concerning the degree three unramified cohomology of a quadric.

\begin{theorem}[{Kahn--Rost--Sujatha~\cite[Thm.~5]{kahn_rost_sujatha}}]
\label{thm:KRS}
Let $k$ be a field of characteristic $\neq 2$.  Let $Q$ be a smooth
quadric surface over $k$.
Then the natural map
$$
H^3(k,\Q/\Z(2)) \to \Hur^3(Q/k,\Q/\Z(2))
$$
is surjective.
\end{theorem}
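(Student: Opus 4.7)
My plan is to reduce to the case where $Q$ is anisotropic with trivial discriminant (a $2$-fold Pfister quadric), and apply results of Arason, Rost, and Merkurjev there.

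First, if $Q(k) \neq \emptyset$, stereographic projection exhibits $Q$ as $k$-birational to $\P^2_k$, so by Proposition~\ref{prop:Pn} applied to the cycle module $\Het^\bullet(-, \Q/\Z(2))$ the map in question is already an isomorphism (in fact universally so). Hence we may assume $Q$ is anisotropic, in which case the index satisfies $i(Q) = 2$. By a classical theorem of Swan (already invoked after Theorem~\ref{thm:Merk_N}), $A_0(Q)$ is universally trivial, so Theorem~\ref{thm:Merk_N} shows that the cokernel of our map is $2$-torsion. Combined with Theorem~\ref{MSBK}, it suffices to prove surjectivity onto $\Hur^3(Q/k, \mu_2^{\tensor 2})$.

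Next, reduce to trivial discriminant by passing to the quadratic extension $L = k(\sqrt{d(Q)})$. Over $L$, the form $q_L$ is similar to a $2$-fold Pfister form $\Pfister{a,b}$ whose Clifford invariant is the quaternion class $(a,b) \in \Brtwo(L)$. In this key case, I would use the Arason exact sequence
$$
H^1(L, \Z/2) \xrightarrow{\ \cup (a,b)\ } H^3(L, \Z/2) \longrightarrow H^3(L(Q_L), \Z/2),
$$
combined with Rost's computation of the Chow motive of a Pfister quadric, to identify the image of $H^3(L, \Z/2) \to H^3(L(Q_L), \Z/2)$ as precisely the unramified subgroup. Descending from $L$ to $k$ would be done using the trace structure on $\Het^\bullet(-, \mu_2^{\tensor 2})$ and a careful analysis of the kernel of $H^3(k, \Z/2) \to H^3(L, \Z/2)$ via the Hochschild--Serre spectral sequence of $L/k$, together with the compatibility of residues and corestrictions in Rost's cycle module formalism.

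The main obstacle is the trivial-discriminant case, which ultimately rests on the Milnor conjecture in weight $3$ (Merkurjev's theorem, an instance of Theorem~\ref{MSBK}) and Rost's motivic decomposition for Pfister quadrics. The discriminant descent is also subtle: a naive use of restriction-corestriction only yields that the cokernel is killed by $[L:k] = 2$, which vanishes on $\mu_2^{\tensor 2}$-coefficients and hence conveys no new information; a finer analysis of the kernel of restriction, together with the behavior of residues under the quadratic extension $L/k$, seems unavoidable.
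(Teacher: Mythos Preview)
The paper does not prove this theorem; it is quoted verbatim from \cite[Thm.~5]{kahn_rost_sujatha} and used as a black box in \S\ref{sec:proof}. So there is no ``paper's own proof'' to compare against, and your proposal is really an attempt to reprove the Kahn--Rost--Sujatha result itself.

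As a sketch, your reduction to the anisotropic case is correct, and the observation that the cokernel is $2$-torsion (via Swan and Theorem~\ref{thm:Merk_N}) is a clean way to pass to $\mu_2^{\tensor 2}$-coefficients. However, two of your steps are not proofs but promissory notes. First, in the Pfister case you assert that Arason's exact sequence together with Rost's motivic decomposition identifies the image of $H^3(L,\Z/2\Z) \to H^3(L(Q_L),\Z/2\Z)$ with $\Hur^3(Q_L/L,\Z/2\Z)$; this identification is essentially the content of the theorem you are trying to prove, and extracting it from the motivic decomposition requires a nontrivial argument that you have not supplied. Second, and more seriously, the descent from $L=k(\sqrt{d(Q)})$ to $k$ is left open: you correctly observe that restriction--corestriction is vacuous on $\mu_2^{\tensor 2}$-coefficients, and ``a finer analysis \dots\ seems unavoidable'' is not a proof. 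Concretely, given $\psi \in \Hur^3(Q/k,\Z/2\Z)$ whose restriction $\psi_L$ lifts to some $\xi \in H^3(L,\Z/2\Z)$, you have produced no mechanism for finding a lift in $H^3(k,\Z/2\Z)$; the Hochschild--Serre spectral sequence controls $\ker(\res)$, not the problem of descending a lift, and the interaction with residues on $Q$ is exactly where the difficulty lies.

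In short, your outline isolates the right subcases but does not close either of the two hard steps. The actual proof in \cite{kahn_rost_sujatha} proceeds differently, via a systematic study of cycle modules on quadrics and function-field transfers, and does not factor through the quadratic discriminant extension in the way you propose.
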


The following is an amplification of one the main results  of Arason's thesis.
\begin{theorem}
\label{thm:arason}
Let $k$ be a field of characteristic $\neq 2$.  Let $Q$ be a smooth
quadric surface over $k$ defined by a nondegenerate quadratic form $q$
of rank 4. Then the kernel of the  map 
$H^3(k,\Q/\Z(2)) \to H^3(k(Q),\Q/\Z(2))$
coincides with the kernel of the map
$H^3(k,\mu_{2}^{\otimes 2}) \to
H^3(k(Q),\mu_{2}^{\otimes 2})$, and it is equal to  the set of symbols
$$
\{ (a,b,c) \; : \; q ~\text{is similar to a subform of}~ \Pfister{-a,-b,-c} \}.
$$
\end{theorem}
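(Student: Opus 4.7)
My plan is to split the statement into two parts: (i) showing that every element in $\ker\bigl(H^3(k, \Q/\Z(2)) \to H^3(k(Q), \Q/\Z(2))\bigr)$ is $2$-torsion, so that by Theorem~\ref{MSBK} this kernel coincides with $\ker\bigl(H^3(k, \mu_2^{\otimes 2}) \to H^3(k(Q), \mu_2^{\otimes 2})\bigr)$; and (ii) identifying this common kernel with the stated set of $3$-fold Pfister symbols.

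For (i), I argue prime by prime. The index $i(Q)$ of any smooth quadric is a power of $2$, so one can choose a finite extension $F/k$ of degree $i(Q) \leq 4$ over which $Q$ acquires a rational point, and hence becomes $F$-rational. Then $F(Q)/F$ is purely transcendental, which gives an injection $H^3(F, \Q/\Z(2)) \hookrightarrow H^3(F(Q), \Q/\Z(2))$, while the composition $\cores_{F/k} \circ \res_{F/k}$ is multiplication by $[F:k]$. For odd primes $p$, the degree $[F:k]$ is a power of $2$ and hence invertible on the $p$-primary component, so the $p$-primary part of the kernel over $k$ vanishes. For $p=2$, the same argument only yields that the kernel is at worst $4$-torsion. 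Reducing from $4$-torsion to $2$-torsion is the main obstacle of the proof; I would address it via a Bockstein argument using the short exact sequence $0 \to \mu_2^{\otimes 2} \to \mu_4^{\otimes 2} \to \mu_2^{\otimes 2} \to 0$ together with the mod-$2$ calculation of (ii), or by appeal to the detailed unramified-cohomology computations in \cite{kahn_rost_sujatha}.

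For (ii), I invoke the norm residue isomorphism $e_3 : I^3(k)/I^4(k) \simeq H^3(k, \mu_2^{\otimes 2})$ (the case $n=3$ of Milnor's conjecture, due to Merkurjev--Suslin), which sends the symbol $(a,b,c)$ to the class of the Pfister form $\Pfister{-a,-b,-c}$. The containment of the set of symbols in the kernel is straightforward: if $q$ is similar to a subform of $\phi = \Pfister{-a,-b,-c}$, then the generic zero of $q$ witnesses an isotropic vector of $\phi$ over $k(Q)$, and since Pfister forms are round, $\phi$ becomes hyperbolic over $k(Q)$, so $(a,b,c) = [\phi] = 0$ in $H^3(k(Q), \mu_2^{\otimes 2})$. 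The reverse containment is the substantive content of Arason's thesis, which uses the Cassels--Pfister subform theorem to show that any $3$-fold Pfister form that becomes hyperbolic over $k(Q)$ must have $q$ as a subform up to scalar. Combining (i) and (ii) identifies both kernels with the claimed set of symbols. The hardest step of the argument is the $2$-torsion reduction in (i), which is precisely what upgrades Arason's classical mod-$2$ theorem to the $\Q/\Z(2)$ version used in this paper.
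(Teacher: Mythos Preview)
Your overall approach matches the paper's: a norm/transfer argument to show the $\Q/\Z(2)$-kernel is $2$-torsion, then Theorem~\ref{MSBK} to identify it with the $\mu_2^{\otimes 2}$-kernel, then Arason's \cite[Satz~5.6]{arason} for the explicit description. However, you have manufactured a phantom obstacle in part~(i) by overestimating the index of $Q$.

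For any smooth quadric hypersurface $Q \subset \P^{n+1}$ with $n \geq 0$, a line in $\P^{n+1}$ not lying on $Q$ meets $Q$ in a zero-cycle of degree $2$, so $i(Q) \mid 2$; equivalently, any quadratic form of rank $\geq 2$ becomes isotropic over a quadratic extension. Hence there is always an extension $F/k$ of degree at most $2$ with $Q(F) \neq \emptyset$, over which $Q_F$ is $F$-rational and $H^3(F,\Q/\Z(2)) \hookrightarrow H^3(F(Q),\Q/\Z(2))$. Your restriction--corestriction argument then gives $2\xi = \cores_{F/k}\res_{F/k}(\xi) = 0$ for any $\xi$ in the kernel, so the kernel is $2$-torsion outright. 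The ``main obstacle'' of reducing from $4$-torsion to $2$-torsion never arises, and no Bockstein argument or appeal to \cite{kahn_rost_sujatha} is needed. This is exactly the ``standard norm argument'' the paper invokes in one line; with this correction your proof is complete and coincides with the paper's.
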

\begin{proof}
By a standard norm argument, 
the kernel of $H^3(k,\Q/\Z(2)) \to H^3(k(Q),\Q/\Z(2))$
is 2-torsion.  By Merkurjev's theorem (see Theorem \ref{MSBK}),
the two kernels in the Theorem thus  coincide. The precise description of the kernel 
with coefficients $\Z/2\Z$ is Arason's ~\cite[Satz~5.6]{arason}.
\end{proof}

However, for our purposes, we will only need to know that certain
special symbols are contained in this kernel.  We can give a direct
proof of this fact.

\begin{lemma}
\label{lem:ct}
Let $k$ be a field of characteristic $\neq 2$.  If
$\quadform{1,-a,-b,abd}$ is isotropic over $k$, then for $w$ any norm
from $k(\sqrt{d})/k$, the symbol $(a,b,w) \in H^3(k,\mu_2^{\tensor
2})$ is trivial.
\end{lemma}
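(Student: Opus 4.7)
The plan is to reduce the triviality of $(a, b, w) \in H^3(k, \mu_2^{\tensor 2})$ to the splitting of the quaternion algebra $(a, b)$ over the quadratic extension $k(\sqrt{d})/k$. Writing $w = \Norm_{k(\sqrt{d})/k}(\alpha)$ for some $\alpha \in k(\sqrt{d})\mult$, Kummer theory identifies $(w) \in H^1(k, \mu_2)$ with the corestriction of the Kummer class $(\alpha) \in H^1(k(\sqrt{d}), \mu_2)$; the projection formula in Galois cohomology then gives
$$
(a, b, w) = (a, b) \cup \cores(\alpha) = \cores\bigl(\res(a, b) \cup (\alpha)\bigr).
$$
It therefore suffices to show $\res_{k(\sqrt{d})/k}(a, b) = 0$, equivalently that the $2$-fold Pfister form $\Pfister{a, b} = \langle 1, -a, -b, ab\rangle$ becomes hyperbolic over $k(\sqrt{d})$.

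The main step is to extract the needed isotropy of $\Pfister{a, b}$ over $k(\sqrt{d})$ from the given isotropy of $\langle 1, -a, -b, abd\rangle$ over $k$. Pick $(x, y, z, s) \in k^4 \smallsetminus \{0\}$ with $x^2 - ay^2 - bz^2 + abd\,s^2 = 0$. If $s = 0$, then $\langle 1, -a, -b\rangle$ is already isotropic, so $\Pfister{a, b}$ is hyperbolic over $k$ and the proof is complete. If $s \neq 0$, rescale to $s = 1$; in the diagonal basis $e_1, e_2, e_3, e_4$ of $\Pfister{a, b}$, the vector $v = xe_1 + ye_2 + ze_3$ is nonzero (since the form takes the value $-abd \neq 0$ on it) and is orthogonal to $e_4$, on which the form takes the value $ab$. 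Hence $\Pfister{a, b}$ contains the nondegenerate $2$-dimensional subform $\langle -abd, ab\rangle \cong ab \cdot \langle 1, -d\rangle$. Since $\langle 1, -d\rangle$ is isotropic over $k(\sqrt{d})$ via the vector $(\sqrt{d}, 1)$, the form $\Pfister{a, b}$ becomes isotropic, hence hyperbolic, over $k(\sqrt{d})$. The degenerate case $d \in k^{\times 2}$ is subsumed: then $\langle 1, -a, -b, abd\rangle \cong \Pfister{a, b}$ itself, and the hypothesis forces hyperbolicity over $k$ directly.

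Hyperbolicity of $\Pfister{a, b}$ over $k(\sqrt{d})$ is classically equivalent to the splitting of the quaternion algebra $(a, b)_{k(\sqrt{d})}$, i.e., $\res_{k(\sqrt{d})/k}(a, b) = 0$ in $H^2(k(\sqrt{d}), \mu_2)$; combining with the projection formula yields $(a, b, w) = 0$. The only step with any real content is spotting the orthogonal subform $\langle -abd, ab\rangle \hookrightarrow \Pfister{a, b}$ from the given isotropy relation; the remaining ingredients (Kummer theory, the projection formula, and the norm-form dictionary for quaternion algebras) are all classical and predate the norm residue theorem.
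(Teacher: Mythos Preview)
Your proof is correct, and the core strategy matches the paper's: both apply the projection formula to $w = \Norm_{k(\sqrt d)/k}(\alpha)$ and reduce to the vanishing of a $2$-symbol over $k(\sqrt d)$. The paper argues by direct symbol manipulation: from an isotropy relation $x^2 - ay^2 = b(u^2 - adv^2) \neq 0$ it rewrites $(a,b,w)$ as $(a,\,u^2-adv^2,\,w)$ using $(a,\,x^2-ay^2)=0$, then applies the projection formula and notes that $u^2 - adv^2 = u^2 - a(\sqrt d\, v)^2$ is a norm from $k(\sqrt d)(\sqrt a)$, so $(a,\,u^2-adv^2)_{k(\sqrt d)}=0$. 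You instead work in the Witt ring, extracting the orthogonal subform $\quadform{-abd,\,ab}$ of $\Pfister{a,b}$ from the isotropy relation and observing it becomes hyperbolic over $k(\sqrt d)$, whence $\res_{k(\sqrt d)/k}(a,b)=0$. The two are equivalent---indeed $(a,b)=(a,\,u^2-adv^2)$ in $\Br(k)[2]$ once one uses $(a,\,x^2-ay^2)=0$---but your presentation has the mild advantage of cleanly handling the degenerate case where the common value $x^2 - ay^2 = b(u^2 - adv^2)$ is zero, which the paper's write-up simply assumes away.
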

\begin{proof}
Put $l = k(\sqrt{d})$.  As $\quadform{1,-a,-b,abd}$ is isotropic,
there exist $x,y,u,v \in k$ such that $x^2-ay^2= b(u^2-adv^2)\neq 0$.
The class $(a,x^2-ay^2)$ is trivial, hence
$$
(a,b,w) = (a,(x^2-ay^2)/(u^2-adv^2),w) = (a,u^2-adv^2, w).
$$
Let $w = N_{l/k}(w')$. By the projection formula, we have that
$$
(a,b,w) = \cores_{k(\sqrt{d})/k}(a,u^2-adv^2,w')
$$
which is trivial since $(a,u^2-adv^2) = (a,u^2-a(\sqrt{d}v)^2) \in H^2(l, \mu_2^{\tensor 2})$ is trivial.
\end{proof}


\section{Cubic fourfolds containing a plane and Clifford algebras}
\label{sec:Clifford_algebras}

Let $X$ be a smooth cubic fourfold over a field $k$.  Suppose $X
\subset \P^5_{k}=\P(V)$ contains a plane $P = \P(W)$, where $W \subset
V$ is a dimension 3 linear subspace of $V$.  Let $\wt{X}$ be the
blow-up of $X$ along $P$ and $\pi : \wt{X} \to \P(V/W)$ the projection
from $P$.  We will write $\P^2_{k} = \P(V/W)$.  Then the blow-up of
$\P^5_{k}$ along $P$ is isomorphic to the total space of the
projective bundle $p : \P(\EE) \to \P^2_{k}$, where $\EE =
(W\tensor\OO_{\P^2_{k}} ) \oplus \OO_{\P^2_{k}}(-1)$, and in which
$\pi : \wt{X} \to \P^2_{k}$ embeds as a quadric surface bundle.

Now choose homogeneous coordinates $(x_0:x_1:x_2:y_0:y_1:y_2)$ on
$\P^{5}_{k}$.  Since $\Aut_k(\P^5)$ acts transitively on the set of planes
in $\P^5_{k}$, without loss of generality, we can assume that $P = \{
x_0=x_1=x_2=0\}$. Write the equation of $X$ as
$$
\sum_{0 \leq m \leq n \leq 2} a_{mn} y_m y_n +
\sum_{0 \leq p \leq 2} b_p y_p + c = 0
$$ 
for homogeneous linear polynomials $a_{mn}$, quadratic polynomials
$b_p$, and a cubic polynomial $c$ in $k[x_0,x_1,x_2]$.  Then we 
define a quadratic form $q : \EE \to \OO_{\P^2_{k}}(1)$ over $\P^2_{k}$ by
\begin{equation}
\label{eq:q}
q(y_0,y_1,y_2,z) = \sum_{0 \leq m \leq n \leq 2} a_{mn} y_m y_n +
\sum_{0 \leq p \leq 2} b_p y_p z + cz^2
\end{equation}
on local sections $y_i$ of $\OO_{\P^2_{k}}$ and $z$ of
$\OO_{\P^2_{k}}(-1)$.  Of course, given $X$, the quadratic form $q$ is
only well-defined up to multiplication by an element of $\Gamma(X,\Gm)
= k\mult$. The quadric fibration associated to
$(\EE,q,\OO_{\P^2_{k}}(1))$ is precisely $\pi : \widetilde{X} \to
\P^2_{k}$.  The associated bilinear form $b_q : \EE \to
\EE\dual\tensor \OO_{\P^2_{k}}(1)$ has Gram matrix
\begin{equation}
\label{eq:b}
\begin{pmatrix}
2a_{00} & a_{01} & a_{02} & b_0 \\
a_{01} & 2a_{11} & a_{12} & b_1 \\
a_{02} & a_{12} & 2a_{22} & b_2 \\
b_0 & b_1 & b_2 & 2c
\end{pmatrix}
\end{equation}
whose determinant $\Delta$ is a homogeneous sextic polynomial defining
the discriminant divisor $D \subset \P^2_{k}$.

\begin{prop}
\label{prop:simpledeg}
Let $X$ be a smooth cubic fourfold containing a plane $P$ over a field
$k$ of characteristic $\neq 2$.  Denote by $\pi : \wt{X} \to \P^2_{k}$ the
associated quadric surface bundle, $D \subset \P^2_{k}$ the discriminant
divisor, and $U = \P^2_{k} \bslash D$.  Then the following are equivalent:
\begin{enumerate}
\item\label{sd.a} The divisor $D$ is smooth 
over $k$.

\item\label{sd.b} The fibers of $q$ are nondegenerate over points of
$U$ and have a radical of dimension 1 over points of $D$.

\item\label{sd.c} The fibers of $\pi$ are smooth quadric surfaces over
points of $U$ and are quadric surface cones with isolated singularity
over points of $D$.

\item\label{sd.d} There is no other plane in $X \times_k \overline{k}$
meeting $P\times_k \overline{k}$.
\end{enumerate}
In this case, we say that $\pi$ has \linedef{simple degeneration}.
\end{prop}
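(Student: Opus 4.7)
The plan is to pivot all four equivalences around condition (b), which encodes the pointwise rank of $q$. First I would note that $\wt{X}$ itself is smooth: $X$ is smooth and $P \subset X$ is a smooth plane, so the blow-up $\wt{X} \to X$ along $P$ is smooth, realized as a divisor in the smooth $\P^3$-bundle $\P(\EE) \to \P^2_{k}$ cut out by $q$. This smoothness, together with the Jacobian criterion, will provide the bridge between the local behavior of $\wt{X}$ and that of $D$.

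The equivalence (b) $\Leftrightarrow$ (c) is the pointwise classification of quadric surfaces in $\P^3$ by rank: rank $4$ gives a smooth quadric, rank $3$ a cone over a smooth conic with isolated vertex, and rank $\leq 2$ a reducible (or worse) quadric. For (a) $\Leftrightarrow$ (b), I would work locally at a point $p \in D$ and trivialize $\EE$, writing $q$ as a symmetric $4 \times 4$ matrix $M$ with entries in $\OO_{\P^2_{k}, p}$. If $M(p)$ has corank $r \geq 2$, a Gram--Schmidt orthogonal decomposition $M = M_0 \perp M_1$ over the local ring (with $M_0$ nondegenerate of rank $4-r$ and $M_1$ an $r \times r$ block in $\mathfrak{m}_p$) yields $\det M \in \mathfrak{m}_p^r$, so $D$ is singular at $p$, contradicting (a). Conversely, if the corank at $p$ is exactly $1$, locally diagonalize $M$ as $\langle u_1, u_2, u_3, f \rangle$ with $u_i(p) \neq 0$ and $f(p) = 0$; the vertex $\wt{x} \in \wt{X}$ of the cone over $p$ (lying in the radical direction) has $\nabla q(\wt{x})$ vanishing in the fiber directions and equal to $\nabla f(p)$ in the base directions, so smoothness of $\wt{X}$ at $\wt{x}$ forces $\nabla f(p) \neq 0$, whence $D = \{f = 0\}$ is smooth at $p$.

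For (a) $\Leftrightarrow$ (d), I would argue geometrically over $\overline{k}$. If (b) fails at $p$, the rank-$\leq 2$ fiber splits as $\Pi_1 \cup \Pi_2$ inside $\P^3_p = \mathrm{span}(P, v)$ for any lift $v$ of $p$; each $\Pi_i$ is then a plane of $X_{\overline{k}}$ distinct from $P$ and meeting $P$ in a line (two planes in $\P^3$ always meet in a line), contradicting (d). Conversely, if a second plane $P' \subset X_{\overline{k}}$ meets $P$ in a line $L$ and $v \in P' \setminus L$, then $P' \subset \P^3_{\pi(v)}$ as a hyperplane, and $X \cap \P^3_{\pi(v)} = P \cup Q$ forces the quadric fiber $Q$ to contain $P'$, hence $Q$ is reducible of rank $\leq 2$, contradicting (b). The main obstacle I expect lies in the residual subcase of this converse when $P \cap P'$ is a single point: each fiber over $\pi(P')$ then contains only a line of $P'$ rather than a hyperplane, so the rank need not drop generically to $2$. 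A direct computation in adapted coordinates shows that $\Delta$ restricted to $\pi(P')$ is nevertheless a perfect square, which combined with smoothness of $\wt{X}$ forces $D$ to be singular somewhere along $\pi(P')$, recovering the contradiction with (a).
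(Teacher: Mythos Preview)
Your treatment of (a) $\Leftrightarrow$ (b) $\Leftrightarrow$ (c) and of (d) $\Rightarrow$ (b) is correct and considerably more self-contained than the paper's proof, which simply cites Beauville and Auel--Bernardara--Bolognesi for (a) $\Leftrightarrow$ (b), and Voisin for (d) $\Rightarrow$ (a). Your identification of the residual case $P \cap P' = \{x_0\}$ as the genuine obstacle in the direction (a) $\Rightarrow$ (d) is also well taken; the paper's one-sentence argument for (c) $\Rightarrow$ (d) (``another plane \ldots\ will give rise to a singular line or plane in a fiber'') treats the case $P\cap P'$ a line cleanly but is opaque in the point case.

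However, your proposed resolution of that case has a real gap. You correctly compute that $\Delta|_C$ is a perfect square---in adapted coordinates one finds $\Delta|_C = (a_{02}b_1 - a_{12}b_0)^2$---but the inference ``$\Delta|_C$ a square and $\wt X$ smooth force $D$ singular along $C$'' runs in the wrong direction. At a point $p \in D\cap C$ where the corank is exactly~$1$, your own argument for (b) $\Rightarrow$ (a) applies: with $v$ spanning the radical one has $\partial_{x_j}\Delta(p) = c \cdot v^{T}(\partial_{x_j}M)(p)\,v$ for some $c\neq 0$. The tangential derivatives ($j=0,1$) vanish because $\Delta|_C = g^2$ and $g(p)=0$; smoothness of $\wt X$ at the vertex $(p,[v])$ then forces $v^{T}(\partial_{x_2}M)(p)\,v \neq 0$, hence $\partial_{x_2}\Delta(p)\neq 0$, so $D$ is \emph{smooth} at $p$, tangent to $C$. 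In fact one checks directly that $\partial_{x_2}\Delta(p)$ is a nonzero multiple of the very quantity whose nonvanishing is the smoothness of $X$ at the corresponding point of $P'$. So the perfect-square computation by itself does not produce a singular point of $D$, and some further argument (or an appeal to the literature, as the paper does) is needed to close this implication.
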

\begin{proof}
The equivalence between \ref{sd.a} and \ref{sd.b} is proved in
\cite[I~Prop.~1.2(iii)]{beauville:prym_varieties_intermediate_jacobian}
over an algebraically closed field and \cite[Prop.~1.6]{ABB:fibrations} in
general.  The equivalence of \ref{sd.b} and \ref{sd.c} follows from
the fact that the singular locus of a quadric is the projectivization
of its radical.  The statement that \ref{sd.d} implies \ref{sd.a}
appears without proof in \cite[\S1~Lemme~2]{voisin}, and holds over a
general field.  Finally, another plane intersecting $P$ nontrivially
will give rise, in the projection, to a singular line or plane in a
fiber of the quadric fibration, contradicting \ref{sd.c}.
\end{proof}

Let $\CliffAlg_0$ be the even Clifford algebra associated to
$(\EE,q,\OO_{\P^2_{k}}(1))$, cf.\ \cite[\S1.5]{ABB:fibrations}.  It is a
locally free $\OO_{\P^2_{k}}$-algebra of rank 8 whose center $\CliffZ$ is
a locally free quadratic $\OO_{\P^2_{k}}$-algebra (cf.\
\cite[IV~Prop.~4.8.3]{knus:quadratic_hermitian_forms}).  The
\linedef{discriminant cover} $r : S = \SSpec \CliffZ \to \P^2$ is a
finite flat double cover branched along the sextic $D \subset \P^2_{k}$.

Assuming simple degeneration, then $S$ is a smooth K3 surface of
degree 2 over $k$.  We say that a cubic fourfold $X$ containing a
given plane $P$ is \linedef{very general} if the quadric surface
bundle associated to $P$ has simple degeneration and the associated K3
surface $S$ satisfies $r^* : \Pic(\P^2_{k}) \to \Pic(S)$ is an
isomorphism. If the ground field $k$ is algebraically closed, then for
any overfield $F$ of $k$, by rigidity of the N\'eron--Severi group,
this implies that $r^* : \Pic(\P^2_{F}) \to \Pic(S_{F})$ is an
isomorphism.  We note that in the moduli space $\mathcal{C}_8$ of
smooth cubic fourfolds containing a plane over $\C$, the locus of very
general ones (in our definition) is the complement of countably many
proper Zariski closed subvarieties (see
\cite[Thm.~1.0.1]{hassett:special-cubics}).  Thus this notion of very
general agrees with the usual notion in algebraic geometry.

A cubic fourfold $X$ containing a plane $P$ over $\C$ is very general
if and only if the Chow group $\CH^2(X)$ of cycles of codimension 2 is
spanned by the classes of $P$ and of a fiber $Q$ of the quadric
fibration (see \cite[\S1~Prop.~2]{voisin} and its proof).  A very
general cubic fourfold $X$ has nontrivial Clifford invariant.  Indeed,
the Zariski closure of a rational section of the quadric fibration
would provide a cycle of dimension 2 having one point of intersection
with a general fiber $Q$.  Such a cycle cannot be rationally
equivalent to any linear combination of $P$ and $Q$, since both $P$
and $Q$ have even intersection with $Q$. Hence $X$ cannot be very
general (see \cite[Thm.~3.1]{hassett:rational_cubic}).

Still assuming simple degeneration, the even Clifford algebra
$\CliffAlg_0$, considered over its center, defines an Azumaya
quaternion algebra $\CliffB_0$ over $S$ (cf.\
\cite[Prop.~3.13]{kuznetsov:quadrics} and \cite[Prop.~1.13]{APS:quadric_surface}).  We refer to the Brauer class
$\beta_{X,P} \in \Br(S)$ of $\CliffB_0$ as the \linedef{Clifford
invariant} of the pair $(X,P)$.  

\begin{lemma}
Let $X$ be a smooth cubic fourfold containing a plane $P$ over a field
$k$ of characteristic $\neq 2$.  Assume that the quadric surface
bundle associated to $P$ has simple degeneration. Let $S$ be the
associated K3 surface of degree 2.  If $X$ contains another plane $P'$
(necessarily skew to $P$), then the fibration $\pi : \wt{X} \to
\P^2_{k}$ has a rational section.  In this case, the generic fiber of
$\pi$ is an isotropic quadric over $k(\P^2)$, hence is a
$k(\P^2)$-rational variety.  In particular, the $k$-variety $X$ is
$k$-rational. Moreover, $\beta_{X,P}=0 \in \Br(S)$.
\end{lemma}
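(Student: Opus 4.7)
The plan is to produce a rational section of $\pi$ directly from $P'$ and then derive all four conclusions from it.

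First, I would verify the parenthetical assertion that $P'$ is skew to $P$. Since $\pi$ is assumed to have simple degeneration, Proposition~\ref{prop:simpledeg}(\ref{sd.d}) rules out any plane in $X_{\overline{k}}$ other than $P$ meeting $P_{\overline{k}}$; hence $P \cap P' = \emptyset$. Writing $P = \P(W)$ and $P' = \P(W')$ with $W, W' \subset V$ three-dimensional, this disjointness becomes $W \cap W' = 0$, so $V = W \oplus W'$. Consequently the quotient map $V \to V/W$ restricts to an isomorphism $W' \xrightarrow{\sim} V/W$, and geometrically, linear projection from $P$ identifies $P'$ with $\P^2_k = \P(V/W)$.

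Second, because $P'$ is disjoint from the center $P$ of the blow-up, its strict transform in $\wt{X}$ is again $P'$, and the restriction of $\pi$ to $P'$ is the isomorphism from the previous step. Thus $P'$ is a section of $\pi$, so the generic fiber $Q_\eta$ over $\eta = \Spec k(\P^2)$ acquires a $k(\P^2)$-rational point, proving it is isotropic. Projection from that rational point gives a $k(\P^2)$-birational map $Q_\eta \dashrightarrow \P^2_{k(\P^2)}$, hence $\wt{X}$ is $k$-birational to a $\P^2$-bundle over $\P^2_k$ admitting a section. Such a bundle is $k$-birational to $\P^2_k \times_k \P^2_k$, which is $k$-rational; since $X$ is birational to $\wt{X}$, it follows that $X$ is $k$-rational.

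Finally, for the Clifford invariant I would invoke the classical equivalence already recalled in the introduction, following \cite[Thm.~6.3]{knus_parimala_sridharan:rank_4}: for a quadric surface bundle with simple degeneration, the Brauer class $\beta_{X,P} \in \Br(S)$ of the even Clifford algebra $\CliffB_0$ is trivial if and only if $\pi$ admits a rational section. Since $P'$ provides such a section, we conclude $\beta_{X,P} = 0$. The only real point requiring care is the first step, where the simple-degeneration hypothesis is essential; the remaining three assertions are formal consequences of the existence of the section.
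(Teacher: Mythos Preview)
Your argument is correct and follows essentially the same route as the paper: use Proposition~\ref{prop:simpledeg}(\ref{sd.d}) to see that $P'$ is skew to $P$, observe that projection from $P$ restricts to an isomorphism on $P'$ (the paper phrases this by choosing $P'$ itself as the target plane of the projection $\varpi$), and then read off the section, isotropy, rationality, and triviality of $\beta_{X,P}$. Your write-up is slightly more detailed than the paper's, in particular in spelling out the decomposition $V = W \oplus W'$ and in citing \cite{knus_parimala_sridharan:rank_4} for the final step, but the underlying ideas are the same.
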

\begin{proof} 
Recall how the fibration $\pi: \wt{X} \to \P^2_{k}$ is constructed.  One
fixes an arbitrary plane $Q \subset \P^5_{k}$ which does not meet $P$.
The morphism $\pi$ is induced by the morphism $\varpi : X \setminus P
\to Q$ sending a point $x$ of $X$ not on $P$ to the unique point of
intersection of the linear space spanned by $P$ and $x$ with the
linear space $Q$.

Let $P'$ be another plane in $X$.  By
Proposition~\ref{prop:simpledeg}, $P'$ must be skew to $P$.  Take the
plane $Q$ to be $P'$.  On points of $P' \subset X$ the map $\varpi : X
\setminus P' \to P'$ is the identity. Thus $\pi$ has a rational
section, the generic fiber is an isotropic quadric, hence rational
over $k(\P^2)$, and the even Clifford invariant of this quadric is
trivial.
\end{proof}

In view of this lemma, when given a smooth cubic fourfold containing a
plane $P$ whose associated quadric fibration has simple degeneration,
we shall abuse notation and write $\beta_{X} \in \Br(S)$ instead of
$\beta_{X,P} \in \Br(S)$.

The generic fiber of $(\EE,q,\OO_{\P^2_{k}}(1))$ is a quadratic form
of rank 4 over $k(\P^2)$ with values in a $k(\P^2)$-vector space of
dimension 1.  Choosing a generator $l$ of $\OO_{\P^2_{k}}(1)$ over
$k(\P^2)$, we arrive at a usual quadratic form $(E,q)$ with
discriminant extension $k(S)/k(\P^2)$.  The generic fiber of $\beta_X$
is then in the image of the restriction map $\Br(k(\P^2)) \to
\Br(k(S))$ by the fundamental relations for the even Clifford algebra
(cf.\ \cite[Thm.~9.12]{KMRT}).  Explicitly, the full Clifford algebra
$C(E,q)$ is central simple over $k(\P^2)$ and its restriction to
$k(S)$ is Brauer equivalent to the even Clifford algebra $C_0(E,q)$,
hence with the generic fiber of $\beta_X$.  We note that $C(E,q)$
depends on the choice of $l$, while $C_0(E,q)$ does not.  We now
construct a particular Brauer class on $k(\P^2)$ restricting to
$\beta_X$ on $k(S)$. This will play a crucial r\^{o}le in the proof of
Theorem~\ref{thm:main}.

\begin{prop}
\label{prop:C}
Let $X$ be a smooth cubic fourfold containing a plane $P$ over a field
$k$ of characteristic $\neq 2$ .  Assume that the associated quadric
surface bundle has simple degeneration along a divisor $D \subset
\P^2_{k}$ and let $S$ be the associated K3 surface of degree 2.  Given
a choice of homogeneous coordinates on $\P^5_{k}$, there exists a line
$L \subset \P^2_{k}$ and a quaternion algebra $\alpha$ over $k(\P^2)$
whose restriction to $k(S)$ is isomorphic to the generic fiber of the
Clifford invariant $\beta_X$, and such that $\alpha$ has ramification
only at the generic points of $D$ and $L$.
\end{prop}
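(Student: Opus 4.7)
My plan is to construct $\alpha$ explicitly by completing the square in one variable of $q$, using the matrix form~\eqref{eq:b}.

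After a preliminary linear change of the coordinates $y_0, y_1, y_2, z$ on $\EE$ (absorbed into the choice of homogeneous coordinates on $\P^5_{k}$), I may assume the linear form $a_{00}$ appearing in~\eqref{eq:q} is nonzero; otherwise every partial derivative of the defining cubic of $X$ would vanish along $P$, contradicting the smoothness of $X$. Set $L = \{a_{00} = 0\} \subset \P^2_{k}$, a line. Over $k(\P^2)$, where $a_{00}$ is invertible, completing the square in $y_0$ gives an isometry
\begin{equation*}
q \cong \langle a_{00}\rangle \perp r_0,
\end{equation*}
where $r_0$ is the rank-3 quadratic form in $y_1, y_2, z$ whose Gram matrix is the Schur complement of the top-left entry of~\eqref{eq:b}. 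Its entries lie in $a_{00}\inv k[x_0, x_1, x_2]$, and by the Schur-complement identity its determinant equals $\Delta/(2a_{00})$. I then take $\alpha := C_0(r_0)$; since $r_0$ has rank $3$, this is a quaternion algebra over $k(\P^2)$.

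To verify that $\alpha \tensor_{k(\P^2)} k(S)$ is isomorphic to the generic fiber $C_0(q)$ of $\CliffB_0$, I compute directly in the Clifford algebra. For any decomposition $q \cong \langle a\rangle \perp \psi$ with $\psi = \langle b_1, b_2, b_3\rangle$ a rank-3 form, the rule $e_0 e_i \mapsto f_i$ with $f_i^2 = -ab_i$ determines an isomorphism of central simple $k(S)$-algebras $C_0(q) \cong C(-a\psi)$. The full Clifford algebra of a rank-3 form equals its even part base-changed to its center, and the even-Clifford Brauer class of a rank-3 form is invariant under similarity, so
$$C_0(q) \cong C_0(-a\psi) \tensor_{k(\P^2)} k(S) \cong C_0(r_0) \tensor_{k(\P^2)} k(S) = \alpha \tensor_{k(\P^2)} k(S)$$
as quaternion algebras over $k(S)$.

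For the ramification: on the open subscheme $U = \P^2_{k} \setminus (L \cup D)$, both $a_{00}$ and the discriminant of $r_0$ are invertible, so $r_0$ is a nondegenerate rank-3 quadratic form on $U$, and $C_0(r_0)$ extends to an Azumaya quaternion algebra over $U$. Hence the Brauer class of $\alpha$ lies in $\Br(U)$, and by purity of the Brauer group on the smooth surface $\P^2_{k}$ the ramification of $\alpha$ is supported at most on the generic points of $L$ and $D$. I expect the main difficulty to lie in the Clifford-algebra identification in the second paragraph, where one must track centers and signs precisely to confirm the isomorphism $C_0(q) \cong \alpha \tensor_{k(\P^2)} k(S)$.
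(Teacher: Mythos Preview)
Your argument is correct, and your worry about the Clifford identification is unfounded: the chain
\[
C_0(q)\;\cong\;C(-a_{00}\,r_0)\;\cong\;C_0(-a_{00}\,r_0)\otimes_{k(\P^2)}k(S)\;\cong\;C_0(r_0)\otimes_{k(\P^2)}k(S)
\]
is standard and the centers line up (the discriminant of $-a_{00}\,r_0$ equals $a_{00}^2$ times that of $q$, so the quadratic center is $k(S)$). The ramification step is also fine, since $r_0$ spreads to a nondegenerate line-bundle-valued ternary form on $\P^2_k\setminus(L\cup D)$, and $C_0$ of such a form is Azumaya.

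Your route differs from the paper's in one structural way. The paper does \emph{not} split off the rank-$3$ piece; instead it trivializes $\OO(1)$ by $a_{00}$ to obtain a rank-$4$ form $q'|_{k(\P^2)}=\langle 1,a,b,abd\rangle$ representing $1$, takes the \emph{full} Clifford algebra of $q'$ (Azumaya of degree $4$ on $\P^2_k\setminus(L\cup D)$), and then computes its Brauer class explicitly as the quaternion symbol $\alpha=(-ab,-ad)$ via the Hasse--Witt formula. Your $\alpha=C_0(r_0)$ is the same Brauer class (indeed $C_0(\langle a,b,abd\rangle)=(-ab,-bd)=(-ab,-ad)$ in $\Br(k(\P^2))$), so the two constructions agree. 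What the paper's presentation buys is the explicit diagonalization $q=\langle 1,a,b,abd\rangle$ and the symbol $(-ab,-ad)$, both of which are used verbatim in \S\ref{sec:proof}: there one must show $\alpha\cup(f)$ dies over $F(Q)$, and the paper does this by expanding $(-ab,-ad,f)$ symbolically and applying Lemma~\ref{lem:ct}. Your construction gives the quaternion structure more directly, but if you follow this path you will still need to diagonalize $r_0$ (equivalently $q$) at the generic point to carry out that later computation.
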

\begin{proof}
For a choice of homogeneous coordinates on $\P^5_{k}$, let $L \subset
\P^2_{k}$ be the line whose equation is $a_{00}$ from \eqref{eq:q}.
The smoothness of $X$ implies that $a_{00}$ is nonzero.  Then on
$\A^2_{k} = \P^2_{k} \bslash L$, the choice of $a_{00}$ determines a
trivialization $\psi : \OO(1)|_{\A^2_{k}} \to \OO_{\A^2_{k}}$, with
respect to which the quadratic form $q' = \psi \circ q|_{\A^2_{k}} :
\EE|_{\A^2_{k}} \to \OO_{\A^2_{k}}$ (given by the dehomogenization of
equation \eqref{eq:q} associated to $\psi$) represents $1$.  Letting
$V = \P^2_{k} \bslash (D \cup L) \subset \A^2_{k}$, we have that
$(\EE|_V,q'|_V,\OO_V)$ is a regular quadratic form of rank 4 on $V$.
In this case, we have the full Clifford algebra $\CliffAlg = \CliffAlg
(\EE|_V,q'|_V,\OO_V)$ at our disposal, which is an Azumaya algebra of
degree 4 on $V$.

Let us prove that $\CliffAlg|_{k(\P^2)}$ is Brauer equivalent to a
symbol.  Since $q'|_{k(\P^2)}$ represents 1, we have a diagonalization
$$
q'|_{k(\P^2)} = \quadform{1,a,b,abd}
$$
where $d \in k(\P^2)\mult/k(\P^2)\mult{}^2$ is the discriminant.  Then
in $\Br(k(\P^2))$ we have
$$
[\CliffAlg|_{k(\P^2)}] = (a,b) + (a,abd) + (b,abd) + (-1,-d) = (-ab,-ad)  
$$
by the formula \cite[Ch.~2,~Def.~12.7,Ch.~9,~Rem.~2.12]{scharlau:book}
relating the Clifford invariant to the 2nd Hasse--Witt invariant.

Letting $\alpha=(-ab,-ad) \in \Br(k(\P^2))$, we   see that
$\alpha$ coincides with $[\CliffAlg|_{k(\P^2)}]$ in $\Br(k(\P^2))$,
hence is unramified at all codimension 1 points of $V$, i.e., 
$\alpha$ is ramified  at most at the generic points of $D$ and $L$. 

The even Clifford algebra is a similarity class invariant, hence we
have an an isomorphism $\CliffAlg_0(\EE|_V,q'|_V,\OO_V) \isom
\CliffAlg_0(\EE|_V,q|_V,\OO(1)|_V) $ over $V$, hence over the inverse
image of $V$ in $S$.

 Finally,
we have $\CliffAlg|_{k(S)} \isom M_2\bigl(\CliffB_0|_{k(S)}\bigr)$,
hence $\alpha$ restricted to $k(S)$ is Brauer equivalent to the
generic fiber of the Clifford invariant $\beta_X \in \Br(S)$.
\end{proof}


\section{Proof of the main result}
\label{sec:proof}

Let us recall the statement.

\begin{theorem}
\label{thm:main_text}
Let $X \subset \P^5$ be a very general cubic fourfold containing a
plane $P \subset \P^5$ over $\C$.  Then 
$\Hur^3(X/\C,\Q/\Z(2))$ 
is universally trivial.
\end{theorem}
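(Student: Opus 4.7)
The plan is to reduce to the quadric surface bundle structure and then combine Theorem~\ref{thm:KRS} (Kahn--Rost--Sujatha) with a careful analysis of residues at codimension-one points of $\P^2$. Let $F/\C$ be any field extension, write $\pi: \wt{X}_F \to \P^2_F$ for the base change of the quadric surface bundle associated to $P$, set $K = F(\P^2)$, and let $Q_\eta$ denote the generic fiber, a smooth quadric surface over $K$. Since $\wt{X}_F$ is birational to $X_F$ and unramified cohomology is a birational invariant, an element $\xi \in \Hur^3(X_F/F, \Q/\Z(2))$ can be viewed inside $H^3(K(Q_\eta), \Q/\Z(2))$. Its restriction to $Q_\eta$ is unramified over $K$, because every discrete valuation of $K(Q_\eta)$ trivial on $K$ extends to a valuation on $F(X)$ trivial on $F$. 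Theorem~\ref{thm:KRS} then produces a lift $\theta \in H^3(K, \Q/\Z(2))$ with $\theta|_{K(Q_\eta)} = \xi$.

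The next step is to analyze the residues $\partial_v(\theta) \in H^2(\kappa(v), \Q/\Z(1)) = \Br(\kappa(v))$ at each codimension-one point $v$ of $\P^2_F$. The unramifiedness of $\xi$ at any component $y$ of the fiber $\wt{X}_v$, combined with compatibility of residues with specialization, forces $\partial_v(\theta)$ to lie in $\ker\bigl(\Br(\kappa(v)) \to \Br(\kappa(y))\bigr)$. When $v \notin D$, the fiber $Q_v$ is a smooth quadric surface and Theorem~\ref{thm:kernel} shows that $\partial_v(\theta)$ vanishes automatically if $v$ does not split in $S$, and is at worst the Clifford invariant $c(Q_v)$ if $v$ does split. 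When $v \in D$, Proposition~\ref{prop:kernel_cone} gives $\partial_v(\theta) \in \{0, c(Q_{v,0})\}$, where $Q_{v,0}$ is the smooth base conic of the quadric cone $Q_v$.

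The heart of the argument is to modify $\theta$ by an element of the kernel $N = \ker\bigl(H^3(K, \Q/\Z(2)) \to H^3(K(Q_\eta), \Q/\Z(2))\bigr)$ so that every residue $\partial_v(\theta)$ becomes zero. A standard norm argument shows $N$ is 2-torsion, and Theorem~\ref{thm:arason} gives a complete symbolic description of $N$; workable concrete symbols are supplied by Lemma~\ref{lem:ct}, namely $(a, b, w)$ with $w$ a norm from the discriminant extension $K(\sqrt{d})/K$. The explicit quaternion algebra $\alpha = (-ab,-ad)$ of Proposition~\ref{prop:C}, ramified only along $D$ and a chosen line $L \subset \P^2$, places the generic Clifford invariant into a controlled form. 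The very general hypothesis that $r^* : \Pic(\P^2) \to \Pic(S)$ is an isomorphism strictly limits which curves on $\P^2_F$ split in the double cover $S \to \P^2_F$; at such split curves, Lemma~\ref{lem:bad6} furnishes a uniformizer with convenient properties, and from it one manufactures a symbol of the form $(a,b,w) \in N$ whose residue at $v$ is exactly $c(Q_v)$. At the components of $D$, an analogous construction uses the ramification of $\alpha$ together with Lemma~\ref{lem:ct}. Summing these corrections yields a modified $\theta$ whose residues vanish at every codimension-one point of $\P^2_F$.

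Once $\theta$ is unramified over $\P^2_F$, the $F$-rationality of $\P^2_F$ together with Proposition~\ref{prop:Pn} applied to $\Het^3(-,\Q/\Z(2))$ gives $\theta \in H^3(F, \Q/\Z(2))$, so $\xi$ is the image of $\theta$, proving universal triviality. The main obstacle is the residue-cancellation step: guaranteeing that one can \emph{simultaneously} kill all the Clifford-invariant residues at codimension-one points of $\P^2_F$. This demands both the very general hypothesis, to restrict the set of curves that split in $S$, and the precise symbolic presentations provided by Proposition~\ref{prop:C}, Lemma~\ref{lem:ct}, and Lemma~\ref{lem:bad6}, to match the constructed corrections to the prescribed residues.
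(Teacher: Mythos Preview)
Your overall strategy coincides with the paper's: lift via Kahn--Rost--Sujatha, analyze residues on $\P^2_F$, and cancel them with a correction built from the Clifford class $\alpha$ of Proposition~\ref{prop:C} and the norm-type uniformizers of Lemma~\ref{lem:bad6}. Two points deserve correction.

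First, your treatment of the discriminant divisor $D$ is off. You assert that $\partial_v(\theta)\in\{0,c(Q_{v,0})\}$ there and propose ``an analogous construction'' to kill it. But $D$ does \emph{not} split in $S$ (the cover is ramified along $D$), so Lemma~\ref{lem:bad6} does not apply and no analogous norm-uniformizer is available. What actually happens is simpler: since we are over $\C$, the base conic $Q_{v,0}$ is defined over $\C(\eta)$ and hence split by Tsen's theorem, so $c(Q_{v,0})=0$ and $\partial_\eta(\theta)=0$ automatically. No correction at $D$ is needed.

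Second, the specific correction symbol matters. You write that one manufactures $(a,b,w)\in N$ with residue $c(Q_v)$ at a bad curve $v$. The paper instead uses $\alpha\cup(f)$ with $\alpha=(-ab,-ad)$ and $f=\prod_\gamma f_\gamma$; this is the right choice because its residue at a bad curve $\gamma$ is $\alpha|_\gamma$, which equals $c(Q)|_\gamma$ by construction of $\alpha$ in Proposition~\ref{prop:C}. The symbol $(a,b,f)$ also lies in $N$ (indeed Lemma~\ref{lem:ct} shows it vanishes over $F(Q)$), but its residue at $\gamma$ is $(a,b)|_\gamma$, which need not coincide with $(-ab,-ad)|_\gamma$. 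The paper then checks that $\alpha\cup(f)$ is unramified at $D$ (because $f$, being a norm from the totally ramified extension $F(\P^2)(\sqrt d)$, becomes a square in the completion at $\eta$), so $\xi-\alpha\cup(f)$ has trivial residues away from the single line $L$; this already forces it to be constant, without needing to kill the residue at $L$.
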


Let $k$ be a field of characteristic $\neq 2$.  Let $X \subset
\P^5_{k}$ be a smooth cubic fourfold containing a plane over $k$.  Let
$\pi : \wt{X} \to \P^2_{k}$ be the associated quadric surface bundle.
We assume that $\pi$ has simple degeneration along a smooth divisor $D
\subset \P^2_{k}$, see Proposition~\ref{prop:simpledeg}.  Denote by
$Q$ the generic fiber of $\pi$; it is a smooth quadric surface over
$k(\P^2)$.  For any field extension $F/k$, we will need to refer to
the following commutative diagram of Bloch--Ogus complexes
\begin{equation}
\label{eq:diagram}
\begin{split}
\xymatrix@C=8pt{
0 \ar[r] & H^3(F,\Q/\Z(2)) \ar[r] \ar[d] & H^3(F(\P^2),\Q/\Z(2))
\ar[r]^(.47){\{\partial_{\gamma}\}}  \ar@{->>}[d] & \displaystyle\bigoplus H^2(F(\gamma),\Q/\Z(1)) \ar[d]\\
0 \ar[r] & \Hur^3(X/F,\Q/\Z(2)) \ar[r] &
\Hur^3(Q_{F}/F(\P^2),\Q/\Z(2)) \ar[r]   &
\displaystyle\bigoplus H^2(F(Q_\gamma),\Q/\Z(1))
}
\end{split}
\end{equation}
where the sums are taken over all points $\gamma$ of codimension 1 of
$\P^2_F$.  The top row is the exact sequence defining the unramified
cohomology of $\P^2_{F}$ (which is constant, by
Proposition~\ref{prop:Pn}) via the residue maps $\partial_\gamma$.
The bottom row is the complex arising from taking residues on $F(Q)$
at points of codimension 1 of $\wt{X}$ whose image in $\P^2_{F}$ is of
codimension 1 on $\P^2_{F}$ (recall that
$\Hur^3(Q_{F}/F(\P^2),\Q/\Z(2))$ consists of classes over $F(Q)$ which
have trivial residues at all rank 1 discrete valuations trivial on
$F(\P^2)$).  Here, $Q_\gamma$ denotes the generic fiber of the
restricted quadric fibration $\pi|_C : \wt{X}|_C \to C$, where $C
\subset \P^2_{F}$ is a projective integral curve with generic point
$\gamma$.  Under the simple degeneration hypothesis, each $Q_\gamma$
is an integral quadric surface.  The bottom complex is also
exact. This is obvious except at the term
$\Hur^3(Q_{F}/F(\P^2),\Q/\Z(2))$, and we shall not used the exactness
at that point.  The vertical maps in the diagram are the natural ones.

Let $\psi \in \Hur^3(X_{F}/F,\Q/\Z(2)) \subset
\Hur^3(Q_{F}/F(\P^2),\Q/\Z(2))$.  By Theorem~\ref{thm:KRS}, this is
the image of an element $\xi \in H^3(F(\P^2),\Q/\Z(2))$.  By the
commutativity of the right-hand square, and since the bottom row of
diagram \eqref{eq:diagram} is a complex, for each $\gamma$ of
codimension 1 in $\P^2_{F}$ we know that
\begin{equation}
\label{eq:inker}
\partial_\gamma(\xi) \in \ker\bigl(H^2(F(\gamma),\Q/\Z(1)) \to
H^2(F(Q_\gamma),\Q/\Z(1))\bigr).
\end{equation}
We recall that if $F$ is any field, then $H^2(F,\Q/\Z(1))$ is
isomorphic to the prime-to-$p$ part of the Brauer group $\Br(F)$,
where $p$ is the characteristic of $F$.  With the notation of \S
\ref{sec:Clifford_algebras}, where the line $L$ is defined by
$a_{00}=0$, let $d = \Delta/a_{00}^6$. Then $\divisor(d) = D - 6L$,
and the class of $d$ in $k(\P^2)/k(\P^2)^{\times 2}$ is the
discriminant of the quadric $Q$.

\begin{defin} 
Fix $L$ as above.  Let $F/k$ be a field extension and $\xi$ a class in
$H^3(F(\P^2),\Q/\Z(2))$.  We call an integral curve $C \subset \P^2_F$
with generic point $\gamma$ a \linedef{bad curve} (for $\xi$) if $C$
is different from $D_F$ and $L_F$ and if $\partial_\gamma(\xi) \neq 0$
in $H^2(F(\gamma),\Q/\Z(1))$.
\end{defin}

There are finitely many bad curves for each given $\xi \in
H^3(F(\P^2),\Q/\Z(2))$.  Let $\alpha \in H^2(k(\P^2),\mu_2)$ be the
class of a quaternion algebra attached to $\pi : \wt{X} \to \P^2_{k}$
and the choice of a line $L$, as in Proposition~\ref{prop:C}.
Theorem~\ref{thm:kernel} and Proposition~\ref{prop:kernel_cone} imply
that the following statements hold concerning bad curves:
\begin{enumerate}
\item The class $d|_\gamma \in H^1(F(\gamma),\mu_2)$ is trivial.

\item The class $\alpha|_\gamma \in H^2(F(\gamma),\mu_2)$ is
nontrivial and coincides with $c(Q)|_\gamma \in \Br(F(\gamma))$.

\item The class $\partial_\gamma(\xi) \in H^2(F(\gamma),\Q/\Z(1))$
also coincides with $c(Q)|_\gamma \in \Br(F(\gamma))$.
\end{enumerate}

For curves $C$ split by the discriminant extension (e.g., for bad
curves), we will construct special rational functions that are
parameters along $C$ and are norms from the discriminant extension.
This is where the very general hypothesis will be used.

\begin{lemma}
\label{lem:bad6}
Let $k$ be a field of characteristic $\neq 2$.  Let $r : S \to
\P^2_{k}$ be a finite flat morphism of degree 2 branched over a smooth
sextic curve $D$.  Assume that $r^* : \Pic(\P^2_{k}) \to \Pic(S)$ is
an isomorphism.  Choose a line $L \subset \P^2_{k}$ and a function $d
\in k(\P^2)$ with divisor $D-6L$ that satisfies
$k(S)=k(\P^2)(\sqrt{d})$.  If $C \subset \P^2_{k}$ is an integral
curve (with generic point $\gamma$) different from $D$ and $L$ such
that $d|_\gamma \in H^1(k(\gamma),\mu_2)$ is trivial, then there
exists a function $f_\gamma \in k(\P^2)$ whose divisor is $C-2nL$ for
some positive integer $n$, and which is a
a norm from $k(S)/k(\P^2)$.
\end{lemma}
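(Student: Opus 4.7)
My plan is to exploit the hypothesis that $d|_\gamma$ is trivial in $H^1(k(\gamma),\mu_2)$, which means $d$ becomes a square in $k(\gamma)$, or equivalently that the double cover $r : S \to \P^2_k$ splits over the generic point $\gamma$ of $C$. Since $C$ is integral and distinct from the branch curve $D$, the preimage $r^{-1}(C)$ breaks into two distinct irreducible components $C_1$ and $C_2$, each mapping birationally onto $C$ and exchanged by the covering involution $\sigma$ of $k(S)/k(\P^2)$. I would first verify that as Weil divisors on the smooth surface $S$ we have the clean identity $r^{*}C = C_1 + C_2$, with no extra correction along the branch curve: writing a potential correction as a multiple of the reduced branch divisor $\tilde D$ on $S$, the identity $r_{*}(r^{*}C) = 2C = r_{*}(C_1 + C_2)$ together with $r_{*}\tilde D = D \neq 0$ forces this correction to vanish.

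Next I would use the Picard hypothesis $r^{*} : \Pic(\P^2_k) \isom \Pic(S)$ to pin down the classes. Since $\Pic(\P^2_k) = \Z \cdot [L]$, each $[C_i]$ equals $m_i \cdot r^{*}[L]$ for some integer $m_i$. The involution $\sigma^{*}$ acts trivially on $r^{*}\Pic(\P^2_k) = \Pic(S)$ but interchanges $C_1$ and $C_2$, so $m_1 = m_2 =: m$. Combining with $r^{*}[C] = [C_1] + [C_2]$ and $[C] = \deg(C) \cdot [L]$ in $\Pic(\P^2_k)$, I obtain $\deg(C) = 2m$. In particular, the splitting of $C$ together with the Picard hypothesis forces $\deg(C)$ to be even, and $m$ to be a positive integer.

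Finally I construct $f_\gamma$ as a norm. Since $[C_1 - m \cdot r^{*}L] = 0$ in $\Pic(S)$, pick $g \in k(S)^\times$ with $\divisor_S(g) = C_1 - m \cdot r^{*}L$, and set $f_\gamma := \Norm_{k(S)/k(\P^2)}(g) = g \cdot \sigma(g) \in k(\P^2)^\times$. Its divisor on $S$ is
$$
(C_1 - m \cdot r^{*}L) + \sigma^{*}(C_1 - m \cdot r^{*}L) = (C_1 + C_2) - 2m \cdot r^{*}L = r^{*}(C - 2mL).
$$
Since $f_\gamma$ comes from $k(\P^2)$, its divisor on $S$ equals $r^{*}$ of its divisor on $\P^2_k$, and $r^{*}$ is injective on Weil divisors because $r_{*} \circ r^{*} = 2 \cdot \id$. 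Hence $\divisor_{\P^2_k}(f_\gamma) = C - 2mL$, which has the required form with $n = m$, and $f_\gamma$ is by construction a norm from $k(S)/k(\P^2)$.

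The argument is largely forced by the hypotheses once the splitting picture is set up; the only slightly subtle step is ruling out a branch-multiplicity correction in $r^{*}C = C_1 + C_2$. The Picard hypothesis is essential: without $\Pic(S)$ cyclic and generated by $r^{*}[L]$, the classes $[C_i]$ could lie off the span of $r^{*}[L]$, and neither the evenness of $\deg(C)$ nor the divisibility needed to produce $g$ would be available.
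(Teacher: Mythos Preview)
Your argument is correct and follows essentially the same route as the paper's proof: split $r^{-1}(C)=C_1\cup C_2$, use $\Pic(S)=\Z\cdot r^*[L]$ and the triviality of the involution on $\Pic(S)$ to get $[C_1]=n\cdot r^*[L]$, choose $g$ with $\divisor_S(g)=C_1-n\,r^*L$, and take $f_\gamma=\Norm_{k(S)/k(\P^2)}(g)$. The only cosmetic difference is that the paper computes $\divisor_{\P^2}(f_\gamma)$ via proper push-forward $r_*(C_1-nH)=C-2nL$ (using $r_*\divisor(g)=\divisor(\Norm(g))$), whereas you compute $\divisor_S(f_\gamma)=r^*(C-2mL)$ and then descend by injectivity of $r^*$; these are two sides of the same coin.
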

\begin{proof}
  Under our hypothesis, $\Pic(S) =
\CH^1(S) = \Z H$, where $H=r^* L$ by flat pull-back.  As a
consequence, the action of $\Aut(S/\P^2)$ on $\Pic(S)=\CH^1(S)$ is
trivial.

If the class of $d$ is a square in $k(\gamma)$, then $f\inv C$ splits
as $C_{1} \cup C_{2}$, with $C_{1}$ and $C_{2}$ both having class $nH$
for some positive integer $n$.  Hence $C_1 - nH = \divisor(g_\gamma)$
for some $g_\gamma \in k(S)$.  However, by proper push-forward,
$r_{*}(C_1-nH) = C-2nL$, hence $C-2nL=
\divisor(N_{k(S)/k(\P^2)}(g_{\gamma}))$.  Set $f_\gamma =
N_{k(S)/k(\P^2)}(g_{\gamma})$.
\end{proof}

\begin{remark}
From the proof of Lemma~\ref{lem:bad6}, one sees that every bad curve
$C$ has even degree.  Using some further intersection theory, one can
even prove that any such curve has degree 6, though we shall not need
this.
\end{remark}

Now assume that $k=\C$.  Let $\alpha \in \Br(\C(\P^2))$ be the class
of the full Clifford algebra $(\EE,q,\OO(1))$ over $\P^2 \bslash (D
\cup L)$, as in Proposition~\ref{prop:C}.  Then $\alpha$ is
unramified, except possibly at the generic points of $D$ and $L$.

Now assume that $X$ is very general.  Fix a field $F/\C$.  For each
bad curve $C$ over $F$, we will choose a parameter $f_\gamma \in
F(\P^2)$ at the generic point $\gamma$ of $C$ as in
Lemma~\ref{lem:bad6}, which we may apply since $r^* : \Pic(\P^2_{F})
\to \Pic(S_{F})$ is an isomorphism by the rigidity of the
N\'eron--Severi group.  Denote by
$$
f = \prod f_\gamma,
$$
where the product is taken over all bad curves.


Let us continue with the proof of Theorem~\ref{thm:main}.
Recall that we have  lifted 
$\psi \in \Hur^3(X_{F}/F,\Q/\Z(2))$ 
to an
element $\xi \in H^3(F(\P^2),\Q/\Z(2))$.
We now compute the residues of $\xi$ along codimension one points of
$\P^2_F$, where we need only worry about the generic points of $D$,
any bad curves $C$, and $L$.

Let us first consider the generic point $\eta$ of $D$.  The hypothesis
that $X$ is very general implies simple degeneration, hence that the
$F(\eta)$-quadric $Q_\eta$ is a quadric cone over a smooth
$F(\eta)$-conic.  The conic is split by Tsen's theorem since it is the
base change of a smooth $\C(\eta)$-conic.  Hence
Proposition~\ref{prop:kernel_cone}, and the commutativity of the right
hand square of diagram \eqref{eq:diagram}, implies that $\xi$ is
unramified at $\eta$.

Now, we will compare $\xi$ with the class $\alpha \cup (f)$, whose
ramification we control.  
Relying on Theorem \ref{MSBK},  
we are implicitly considering an inclusion
$H^3(F(\P^2),\mu_2^{\tensor 2}) \subset H^3(F(\P^2),\Q/\Z(2))$.
By construction, the function $f$ is a norm from the extension
$F(\P^2)(\sqrt{d})/F(\P^2)$, i.e., is of the form $f = g^2-dh^2$ for
some $g, h \in F(\P^2)$. Also, $f$ has its zeros and poles only along
the bad curves and $L$, hence in particular, $f$ is a unit at the
generic point $\eta$ of $D$.

The extension $F(\P^2)(\sqrt{d})/F(\P^2)$ is
totally ramified at $\eta$.  In particular, any unit in the local ring
at $\eta$ which is a norm from $F(\P^2)(\sqrt{d})$ reduces to a square
in the residue field $F(\eta)$.  Thus $f$ lifts to a square in the
completion $\widehat{F(\P^2})_{\eta}$.

Now we consider the residues of $\alpha \cup (f) \in
H^3(F(\P^2),\mu_2^{\tensor 2})$ along codimension one points of
$\P^2_F$.  Both $\alpha \in H^2(F(\P^2),\mu_2)$ and $(f)
\in H^1(F(\P^2),\mu_2)$ are unramified away from the generic points of
$D$, $L$, and the bad curves $C$.  

At the generic point $\eta$ of $D$, the function $f$ is a square in
the completion $\widehat{F(\P^2})_{\eta}$.  Thus, the residue of
$\alpha \cup (f)$ at $\eta$ is zero.  At a bad curve, $\alpha$ is
regular and the valuation of $f$ is one. Thus the residue at such a
curve is $\alpha|_\gamma = c(Q)|_\gamma$.

Thus, we have that the difference $\xi - \alpha \cup (f) \in H^3(F(\P^2),\Q/\Z(2))$ has trivial residues away
from $L$, hence it comes from a constant class $\xi_0$ in the image of
$H^3(F,\Q/\Z(2)) \to H^3(F(\P^2),\Q/\Z(2))$.

Now we show that $\alpha \cup (f)$ vanishes when restricted to
$H^3(F(Q),\mu_2^{\tensor 2})$.  In the notation of the proof of
Proposition~\ref{prop:C}, $q|_{F(\P^2)} = \quadform{1,a,b,abd}$
becomes isotropic over $F(Q)$ and $\alpha=(-ab,-ad)$.  Since $f$ is a
norm from $F(\P^2)(\sqrt{d})$, Lemma~\ref{lem:ct} implies that
$(a,b,f)$ is trivial in $H^3(F(Q),\mu_2^{\tensor 2})$.  Further, since
$f$ is a norm from $F(\P^2)(\sqrt{d})$, we have $(d,f)=0$.

 But then
$$
\alpha \cup (f) = (ab,ad,f) = (a,b,f) + (a,a,f) + (ab,d,f)
$$
is trivial in $H^3(F(Q),\mu_2^{\tensor 2})$ as well.  Thus 
$\psi \in \Hur^3(X_{F}/F,\Q/\Z(2))$
is the image of the constant class $\xi_0 \in H^3(F,\Q/\Z(2))$.


\begin{remark}
We can give a different argument, using Arason's
result (see Theorem~\ref{thm:arason}), for the vanishing of $\alpha \cup
(f)$ in $H^3(F(Q),\mu_2^{\tensor 2})$.  As in the notation of the
proof of Proposition~\ref{prop:C}, we write $q_{k(\P^2)} =
\;\quadform{1,a,b,abd}$ and $\alpha = (-ab,-ad)$.  The 3-Pfister form
associated to $\alpha \cup (f)$ decomposes as
$$ 
\Pfister{-ab,-ad,f} \;=\; \quadform{1,ab,ad,bd} \perp -f
\!\quadform{1,ab,ad,bd} 
$$
Since $f$ is a norm from $F(\P^2)(\sqrt{d})$ we have that $d$ is a
norm from $F(\P^2)(\sqrt{f})$.  Thus $d$ is a similarity factor of the
norm form of $F(\P^2)(\sqrt{f})/F(\P^2)$, i.e., we have an isometry
$\quadform{1,-f}\, \isom \,\quadform{d,-df}$. Hence the 3-Pfister form
\begin{align*}
\Pfister{-ab,-ad,f} & =\; \quadform{1,ab, ad, bd}  \perp  -f \quadform{1,ab,ad,bd} \\ 
 & = \; \quadform{ab,ad,bd} \perp \quadform{1,-f} \perp -f\quadform{ab,bd,ad} \\
 & = \; \quadform{ab,ad,bd} \perp \quadform{d,-df} \perp -f\quadform{ab,bd,ad} \\
 & = \; \quadform{d,ab,ad,bd} \perp -f\quadform{d,ab,bd,ad}
\end{align*}
contains the form $\quadform{d,ad,bd,ab} \; =
d\!\quadform{1,a,b,abd}$.  Thus by Theorem~\ref{thm:KRS}, we have that
$\alpha \cup (f)$ is trivial in
$\Hur^3(Q_{F}/F(\P^2),\Q/\Z(2))$. \end{remark}

\begin{cor}
\label{cor:mainZ/2}
Let $X \subset \P^5$ be a very general cubic fourfold containing a
plane $P \subset \P^5$ over $\C$.  Then $\Hur^3(X/\C,\mu_2^{\tensor
2})$ is universally trivial.
\end{cor}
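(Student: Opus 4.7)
The plan is to deduce Corollary \ref{cor:mainZ/2} from Theorem \ref{thm:main_text} by a formal diagram chase, using Theorem \ref{MSBK} to compare the coefficient groups $\mu_2^{\tensor 2}$ and $\Q/\Z(2)$. Fix a field extension $F/\C$. The coefficient inclusion $\mu_2^{\tensor 2} \hookrightarrow \Q/\Z(2)$ is compatible with residue maps, hence induces a commutative square whose horizontal arrows are the natural maps $H^3(F,-) \to \Hur^3(X_F/F,-)$ and whose vertical change-of-coefficients maps $\iota_F : H^3(F,\mu_2^{\tensor 2}) \to H^3(F,\Q/\Z(2))$ and $\iota_X : \Hur^3(X_F/F,\mu_2^{\tensor 2}) \to \Hur^3(X_F/F,\Q/\Z(2))$ are both injective by Theorem \ref{MSBK} (applied over $F$ and over $F(X)$, noting that unramified classes embed in the cohomology of the function field).

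Given $\psi \in \Hur^3(X_F/F,\mu_2^{\tensor 2})$, I would apply Theorem \ref{thm:main_text} to the class $\iota_X(\psi)$ to produce some $\xi_0 \in H^3(F,\Q/\Z(2))$ whose restriction to $F(X)$ equals $\iota_X(\psi)$. The heart of the matter is then to check that $\xi_0$ is $2$-torsion, because the Bockstein sequence associated with $0 \to \mu_2^{\tensor 2} \to \Q/\Z(2) \xrightarrow{\cdot 2} \Q/\Z(2) \to 0$, combined with the injectivity in Theorem \ref{MSBK}, identifies $H^3(F,\Q/\Z(2))[2]$ precisely with the image of $\iota_F$. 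Granting this, one obtains $\xi_0' \in H^3(F,\mu_2^{\tensor 2})$ with $\iota_F(\xi_0')=\xi_0$, and commutativity of the square together with injectivity of $\iota_X$ then force $\xi_0'|_{F(X)} = \psi$, which is the desired surjectivity.

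To verify the $2$-torsion property of $\xi_0$, I would use that every smooth cubic fourfold over $\C$ contains a line, and therefore by Theorem \ref{thm:unirat} admits a degree-$2$ unirational parameterization by a $\C$-rational variety $Y$. Base-changing to $F$, this exhibits $F(X)$ as a subfield of the purely transcendental extension $F(Y)/F$, so by $\mathbb{A}^1$-invariance (Proposition \ref{prop:Pn} applied iteratively to $\Het^3(-,\Q/\Z(2))$) the restriction $H^3(F,\Q/\Z(2)) \to H^3(F(X),\Q/\Z(2))$ is injective. Since $\iota_X(\psi)$ is $2$-torsion by construction, this forces $2\xi_0 = 0$ in $H^3(F,\Q/\Z(2))$. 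The entire argument is formal once Theorem \ref{thm:main_text} is in hand; the only subtlety to watch is that the coefficient inclusion commutes with residue maps so that $\iota_X$ is well-defined on unramified subgroups, which is standard.
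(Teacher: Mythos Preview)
Your proof is correct and follows essentially the same route as the paper: set up the commutative square comparing $\mu_2^{\tensor 2}$ and $\Q/\Z(2)$ coefficients, use Theorem~\ref{MSBK} for injectivity of the vertical maps, apply Theorem~\ref{thm:main_text} to lift $\iota_X(\psi)$ to a constant class $\xi_0$, argue that $\xi_0$ is $2$-torsion and hence comes from $H^3(F,\mu_2^{\tensor 2})$, and conclude by injectivity of $\iota_X$. The only difference is in how you establish injectivity of $H^3(F,\Q/\Z(2)) \to \Hur^3(X_F/F,\Q/\Z(2))$: the paper simply specializes at an $F$-rational point of $X$ (which exists since $X(\C)\neq\emptyset$), whereas you invoke the unirational parameterization from Theorem~\ref{thm:unirat} to embed $F(X)$ into a purely transcendental extension of $F$---this works but is more than you need, and the degree~$2$ plays no role in your argument.
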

\begin{proof}
In the following commutative diagram
$$
\xymatrix @R=14pt{
H^3(F,\mu_{2}^{\otimes 2})  \ar[d] \ar[r]  &  \Hur^3(X_{F}/F,\mu_{2}^{\otimes 2}) \ar[d] \\
H^3(F,\Q/\Z(2))  \ar[r]   & \Hur^3(X_{F}/F,\Q/\Z(2)  ) 
}
$$
the horizontal maps are injective since we may specialize to an
$F$-rational point. The vertical maps are injective by Theorem
\ref{MSBK}.  By Theorem~\ref{thm:main_text}, any $\psi \in
\Hur^3(X_{F}/F,\mu_2^{\tensor 2}) \subset \Hur^3(X_{F}/F,\Q/\Z(2))$ is
the image of a constant class $\xi_0 \in H^3(F,\Q/\Z(2))$, which by
the diagram is 2-torsion, hence comes from an element in
$H^3(F,\mu_{2}^{\otimes 2})$.  Since the right hand side vertical map
is injective, the proof is complete.
\end{proof}


\providecommand{\bysame}{\leavevmode\hbox to3em{\hrulefill}\thinspace}
\providecommand{\href}[2]{#2}

\end{document}